%% file: main.tex
\RequirePackage{fix-cm}
\documentclass[smallextended]{svjour3}       
\smartqed  
\usepackage{graphicx}
\usepackage{mathptmx}      
\usepackage{subcaption}
\usepackage{amsmath}
\usepackage{amssymb}
\usepackage{hyperref}
\makeatletter
\let\cl@chapter\undefined
\makeatletter
\usepackage{cleveref}
\usepackage{xcolor}
\definecolor{pec}{HTML}{FEC04C}

 \usepackage{amsthm}
 
\usepackage{thmtools, thm-restate}
\usepackage{tabularx}
\usepackage{todonotes}
\usepackage{optidef}
\usepackage{algpseudocode}
\usepackage{algorithm}
\usepackage{tikz}
\usetikzlibrary{hobby, shapes.geometric, arrows, arrows.meta, calc}
\usepackage{tikz-3dplot}
\usepackage{authblk}
\usepackage{makecell}

\usepackage[style=numeric,maxnames=10]{biblatex}
\usepackage{bm}
\newcommand{\real}{\mathbb{R}}
\newcommand{\complex}{\mathbb{C}}
\newcommand{\jim}{\bm{i}}
\newcommand\M{\mathcal{M}}%

\newcommand{\EFF}{E_{\mathrm{FF}}}
\newcommand{\Einc}{E^{\mathrm{inc}}}
\newcommand{\Eout}{E^\mathrm{out}}
\newcommand{\Bout}{B^\mathrm{out}}
\newtheorem{fact}{Fact}
\crefname{fact}{fact}{facts}
\Crefname{fact}{Fact}{Facts}

\renewcommand\Re{\operatorname{Re}}%
\renewcommand\Im{\operatorname{Im}}%
\newcommand\supp{\operatorname{supp}}%

\DeclareMathOperator{\rowsp}{rowsp}
\DeclareMathOperator{\Dom}{Dom}
\newcommand{\C}{\mathbb{C}}
\newcommand{\R}{\mathbb{R}}

\newcommand{\V}{\mathcal{V}}

\newcommand*\dd{\mathop{}\!\mathrm{d}}

\begin{document}
\tdplotsetmaincoords{60}{110}
\pgfmathsetmacro{\rvec}{.8}
\pgfmathsetmacro{\thetavec}{30}
\pgfmathsetmacro{\phivec}{60}

\title{Physics-Informed Mixed Integer Programming Methods for Optimal Patch Antennas
\thanks{This research is conducted as part of the Dutch National Growth Fund programme \mbox{POLARIS} (www.polaris-ngf.nl).
}
}

\titlerunning{Physics-Informed MIP Methods for Optimal Patch Antennas}        

\author{Ruben Hoeksma         \and
        Jesse van Rhijn       \and
        Marc Uetz             \and
        Wick Wijnholds
}


\institute{R.P. Hoeksma \at
              University of Twente, Faculty of Electrical Engineering, Mathematics and Computer Science \\
              \email{r.p.hoeksma@utwente.nl}           
           \and
           J. van Rhijn \at
              University of Twente, Faculty of Electrical Engineering, Mathematics and Computer Science\\
              and
              TNO, Radar Technology Technology Department, The Hague\\
              \email{j.vanrhijn@utwente.nl}
           \and
           M. Uetz \at
              University of Twente, Faculty of Electrical Engineering, Mathematics and Computer Science\\
              \email{m.uetz@utwente.nl}
           \and
           W.J. Wijnholds \at
              University of Twente, Faculty of Electrical Engineering, Mathematics and Computer Science\\
              \email{w.j.wijnholds@utwente.nl}
}

\date{
}

\maketitle

\begin{abstract}
Antennas come in various shapes and sizes, as their performance characteristics are very much dependent on their exact shape. 
Coming up with a shape that matches desired design characteristics has, up until this moment, been largely the domain of metaheuristics and local optimization methods. 
While these methods are convenient, they generally provide no performance guarantees and rarely allow to enforce hard design constraints.
In this paper, we show how Mixed-Integer Programming (MIP) can be used to optimize the shape of so-called patch antennas.
The methodological focus is to show how underlying laws of physics allow to derive effective bounds on variables as well as new cutting planes. These greatly improve the effectiveness of the underlying basic MIP model, enabling the automated design of practically relevant patch antennas for a wide variety of applications.
\end{abstract}

\section{Introduction}
\label{section: introduction}

Mixed-Integer Programming is one of the most important workhorses to solve complex
combinatorial optimization problems in operations research.
This success stems from increases in raw computational speed on the one hand, but more importantly from developments on the algorithmic side over the past decades.
Yet, in various domains of engineering, the optimization methods of choice remain heuristic algorithms.
In particular, antenna design is almost exclusively the domain of genetic optimization algorithms and their variants~\cite{lucchini2022topology}.

In part, this is due to the nature of the optimization problems involved, which are highly non-linear in both objective and constraints, and often cannot be easily formulated as mathematical optimization problems at all.
In~\cite{hoeksma2026solving}, however, we developed a nearly linear mixed-integer program (MIP) capable of optimizing simple electromagnetic wave scatterers such as a dipole antenna and the so-called Yagi-Uda antenna. This places a class of PDE-constrained design problems within the realm of applicability of mixed-integer programming.

The MIP developed in~\cite{hoeksma2026solving} was proved to be capable of optimizing simple, one-dimensional antenna geometries.
Most practically useful antennas are, however, \emph{two-dimensional} objects.
Of particular interest are \emph{patch antennas}, also known as \emph{microstrip antennas}~\cite{balanisAntennaTheoryAnalysis2012}.
These are antennas that can be printed directly on top of a printed circuit board (PCB), drastically reducing manufacturing cost and complexity.
One additional benefit of this approach is that these antennas can be printed in many different shapes, which allows the antenna engineer to tailor them for their application, since the performance characteristics of antennas are heavily dependent on their shape. 

In this paper, we extend the framework introduced in~\cite{hoeksma2026solving} to be able to solve two-dimensional antenna design problems.
This extension comes with several additional layers of complexity.
First, for planar structures, new electromagnetic constraints are needed to design practically useful antennas. 
Second, \emph{impedance matching} can be much more challenging in this setting.
Furthermore, the geometries require far more computational effort to solve their associated optimization problems to optimality.

To address these challenges, we introduce new convex constraints that allow us to include the electromagnetic design requirements of patch antennas.
Moreover, we derive a strengthened formulation than the one developed in~\cite{hoeksma2026solving}, by using the law of conservation of energy.
This enhanced formulation provides a far stronger dual bound compared to the initial formulation.
We view this as a form of \emph{physics-informed} mixed-integer programming, where the physical properties of antennas naturally provide alternative formulations with more favourable computational properties.

The rest of this paper is organized as follows:
First, with the aim of being  self-contained, in \Cref{section: background} we give a brief summary of the work in~\cite{hoeksma2026solving}.
Then, in \Cref{section: physicscuts}, we derive the energy conservation constraint.
In \Cref{section: constraints}, we introduce additional constraints which are derived from design requirements of antennas, including specifically new impedance and polarization constraints.
In \Cref{section: computation}, we validate the enhanced model by introducing the patch antenna test case. We demonstrate the improvements that can be obtained from the physics-derived constraints of \Cref{section: physicscuts} in terms of the strength of the formulation.
Finally, in \Cref{section: discussion}, we provide some concluding remarks.

\section{Background and Basic Model}
\label{section: background}

\subsection{Notation}
\label{section: notation}

Since summation indices are often denoted with the letter $i$, the imaginary unit is denoted by a boldface $\jim$.
We write the complex conjugate of a complex number $z \in \complex$ as $\bar z$,
and denote the Hermitian adjoint (that is, the complex conjugate transpose) of a matrix $A \in \complex^{n \times m}$ as $A^H$.\footnote{
    This notation is standard in the mathematics literature. However, physics and engineering works often denote the complex conjugate as $(\cdot)^*$ and the Hermitian adjoint as $(\cdot)^\dagger$.
}

We use the standard component-wise partial order for vectors; that is, if $x, y \in \real^n$, then $x \leq y$ means $x_i \leq y_i$ for all $i = 1, ..., n$.
For complex vectors we extend this in the natural way: if $x, y \in \complex^n$, then $x \leq y$ means $\Re\{x_i\} \leq \Re\{y_i\}$ and $\Im\{x_i\} \leq \Im\{y_i\}$ for all $i \in 1, ..., n$.

We will often work in spherical coordinates $(r, \vartheta, \varphi)$, where
\begin{align}
    \label{eq: spherical coordinates x}
    x &= r\sin\vartheta\cos\varphi, \\
    \label{eq: spherical coordinates y}
    y &= r\sin\vartheta\sin\varphi, \\
    \label{eq: spherical coordinates z}
    z &= r \cos\vartheta.
\end{align}
for some Cartesian coordinate system $(x, y, z)$ for $\R^3$.
Here, $r$ is the Euclidean distance from the origin, $\vartheta \in [0, \pi]$ is the polar angle with respect to the $z$-axis, and $\varphi \in [0, 2\pi)$ is the azimuthal angle with respect to the $x$-axis.

\subsection{Electromagnetism}
\label{section: electromagnetism}

The problem we tackle in this paper---the optimal design of patch antennas---is likely unfamiliar to the majority of mathematical programming specialists.
We must therefore first introduce several foundational concepts from electromagnetism and antenna engineering, before we can even state the optimization problem formally.
To keep this paper streamlined, the discussion in this section acts only as a high-level explanation of the required concepts.
For a more detailed discussion of the same concepts, we refer to~\cite{hoeksma2026solving} and the references therein.

\paragraph{Electric and magnetic fields.}
    In a nutshell, an antenna is a conducting (usually metallic) object to which a voltage oscillating at a natural frequency $\omega$ is applied.
    In response to this voltage, a current runs along the surface of the antenna, which in turn acts as a source for electric and magnetic fields.
    These fields are described by vector-valued functions, denoted by $\vec{E}$ and $\vec{B}$.
    The relationship between these fields and their sources is described by Maxwell's equations of electromagnetism.
    
    A fascinating consequence of these equations is that these fields can propagate as waves; we call this \emph{electromagnetic radiation}. 
    Not all fields generated by the antenna are in the form of electromagnetic waves; some of them merely oscillate close to the antenna. 
    \Cref{fig: blob} schematically depicts the phenomenon of electromagnetic \emph{wave scattering}, wherein an incoming wave interacts with a body to produce an outgoing (scattered) wave.

\begin{figure}
    \centering
    \begin{tikzpicture}
        \path[draw,use Hobby shortcut,closed=true]
            (0,0) .. (1,.5) .. (3,1) .. (4,.3) .. (2,-1) .. (.5,-1);
            \node[] (x) at (2, 1.1) {$\Omega$};

            \node at (2,1.1) {$\Omega$};
           
            \foreach \i in {-1, -0.5, 0, 0.5, 1, 1.5} {
                \draw[->] (-2, -\i) -- (-1, -\i);
            }

            \node at (2, 0) {$\mathcal{L}J = (\Einc)_\parallel$};
            
            \node[] (y) at (-3, -0.25) {$\Einc$};

            \draw [red,thick,domain=20:60] plot ({2+3.5*cos(\x)}, {-0.8+3.5*sin(\x)});
            \draw [red,thick,domain=20:60] plot ({2+4*cos(\x)}, {-0.8+4*sin(\x)});
            \draw [red,thick,domain=20:60] plot ({2+4.5*cos(\x)}, {-0.8+4.5*sin(\x)});
            
            \node[red] (Eout) at ({2+5.25*cos(40)}, {-0.8+5.25*sin(40)}) {$\Eout$};

    \end{tikzpicture}
    \caption{
        A general electromagnetic scattering problem.
        The incident field $\Einc$ induces a current density $J$ on the body $\Omega$, which radiates a field $\Eout$.
        The current density $J$ and the tangential component of the incoming field are related by a (linear) operator equation $\mathcal{L}J = (\Einc)_\parallel$.
        In the case of an antenna, $\Einc$ is provided by a time-harmonic voltage source.
    }
    \label{fig: blob}
\end{figure}
    
\paragraph{Far-field pattern.}
    In antenna design, we are interested in radiating fields.
    Therefore we measure the fields far away from the antenna, where only the radiation is measurable.
    We call this the \emph{far-field region}.
    In the far-field region, the magnitudes of the electric and magnetic fields are related by a simple constant (see \Cref{fact: far-field}) so we only need to evaluate the electric field. 
    
    Antenna engineers are particularly interested in the way this field varies over different directions.
    Imagine a sphere centred on the antenna, with a radius much larger than the antenna.
    If we evaluate the electric field only on this spherical shell, the result is the \emph{far-field pattern}, defined as
    \[
        \EFF(\vartheta, \varphi) = \lim_{\|r\| \to \infty} \|r\|\vec{E}(\vec r),
    \]
    with the position vector $\vec r = (x, y, z)$ written in spherical coordinates, cf.~\Cref{eq: spherical coordinates x,eq: spherical coordinates y,eq: spherical coordinates z}.
    Here, $(\vartheta, \varphi)$ are the polar and azimuthal angle of $\vec{r}$ in a suitably chosen spherical coordinate system. 
    This function captures exactly the properties of the radiated fields relevant to antenna design.
    
\paragraph{Voltage, current, and complex numbers.}
	Antennas are made from a conductive material (usually, a metal like copper or aluminum).
    When an oscillating voltage $V$ is applied to the antenna at some angular frequency $\omega > 0$,
    a current $I$ is induced that oscillates at the same frequency, but possibly with a different phase~\cite{balanisAntennaTheoryAnalysis2012}.
    It is therefore common practice to represent both as complex numbers: the amplitude then represents the amplitude of the oscillation, while the argument represents the phase shift relative to some reference.
    Consequently, also the oscillating electric and magnetic fields are written as complex-valued vector functions, encoding both the amplitude and phase information of the radiating waves.

\paragraph{Input impedance and reflection.}
    The voltage applied to the input (or \emph{feed}) of the antenna is known as the feed voltage $V_{\mathrm{feed}}$; the current induced by this voltage is the feed current $I_{\mathrm{feed}}$.
    The ratio of these is called the \emph{input impedance} of the antenna,
	\[
		Z_{\mathrm{in}} = \frac{V_{\mathrm{feed}}}{I_{\mathrm{feed}}}.
	\]
	Note that $Z_{\mathrm{in}}$ is a complex number; it can be seen as a time-harmonic analogue to electrical resistance~\cite{balanisAntennaTheoryAnalysis2012}.

    The input impedance is an especially important quantity in antenna design, since also the \emph{feed line} to which the antenna is connected has a characteristic impedance, denoted $Z_0$.
    When these two impedances are nearly equal, power can flow unimpeded from the feed line to the antenna.
    Conversely, when the impedances are mismatched, only a small fraction of the supplied power is accepted by the antenna, while the remainder is \emph{reflected}.
    Hence, ensuring that the impedance of an antenna is matched to that of its feed line is essential; this is known as \emph{impedance matching}~\cite{balanisAntennaTheoryAnalysis2012}.

\paragraph{Meshes.}
    Maxwell's equations of electromagnetics are a system of coupled partial differential equations (PDEs).
    Closed-form solutions for these equations are only known in highly symmetric situations, or under restrictive assumptions. 
    Therefore, solutions to these equations are estimated by using computational methods, which require discretization of the antenna geometry.
    Typically, this is done by approximating the antenna as a \emph{mesh} $\M = (\V, T)$, where $\V$ is a set of vertices and $T \subseteq \binom{\V}{3}$ is a set of triangles.
    
    For our purposes, it is convenient to be able to speak of a \emph{submesh} of a mesh $\M$:
    If $\M' = (\V,  T')$ is such that $\V' \subseteq \V$ and $T' \subseteq T$,  then we say that $\M'$ is a submesh of $\M$, and write $\M' \subseteq \M$.

\paragraph{Boundary Element Method.}
    Given a mesh $\M$, Maxwell's equations can be solved approximately using the \emph{Boundary Element Method} (BEM).
    In this method, one associates with $\M$ a basis of functions $B_\M$ and a matrix $Z_\M \in \complex^{B_\M \times B_\M}$.
    The \emph{excitation} incident on the antenna, for example due to a voltage source or an incident signal, yields a vector $V \in \complex^{B_\M}$.
    Then, the current density $J$ on the surface of the antenna induced by the incident electric field can be approximated as
    \[
        J = \sum_{f \in B_\M} I_f f,
    \]
    where the coefficients $I_f \in \complex$ are found as the (unique) solution to the linear system
	\[
		Z_{\M}I_{\M} = V_\M.
	\]
    For a more detailed discussion on how to choose $B_\M$ and use it to compute $Z_\M$ and $V_\M$ is computed, we once again refer to~\cite{hoeksma2026solving}.
    In the following, one can simply assume that given $\M$, there is a systematic procedure to obtain these data.

    The current coefficients $I \in \complex^{B_\M}$ also allow one to easily compute the far-field pattern produced by the associated current density as a linear form.
    Let 
    \[
    \EFF(\vartheta, \varphi)~:~\mathbb{S}^2~\to~\complex^3
    \] 
    be the far-field pattern due to $J$, and let $e_\alpha \in \real^3$ be any unit vector.
    Then the component of $\EFF(\vartheta, \varphi)$ parallel to $e_\alpha$ is
    \[
        e_\alpha \cdot \EFF(\vartheta, \varphi) = c_\M(\vartheta, \varphi)^T I.
    \]
    The (continuous) functions $c_\M(\vartheta, \varphi): \mathbb{S}^2 \to \complex^{B_\M}$ depend only on the mesh and its associated basis $B_\M$, and are straightforward to compute.

\subsection{Sub-Mesh Selection Problem}

From the preceding section, we can understand that different meshes $\M$ may induce different far-field patterns $\EFF$ for the same physical excitation.
As the far-field pattern is of particular interest to antenna designers, one may ask what antenna shape, subject to reasonable constraints, produces the best far-field pattern for a given purpose.

Here, as in~\cite{hoeksma2026solving}, we consider a simple yet flexible version of this question: Given a fixed mesh $\M$, which submesh $\M' \subseteq \M$ maximizes the magnitude of a specified component of the far-field pattern?
This leads to the \emph{sub-mesh selection problem}:
\begin{quote}
	For a given mesh $\M$, applied voltage $V$, unit vector $e_\alpha \in \real^3$ and direction $\hat{r}_{\vartheta, \varphi} \in \mathbb{S}^2$, find a submesh $\M' \subseteq \M$ that maximizes the $e_\alpha$-component of electric field in the far-field region in the direction $\hat{r}_{\vartheta, \varphi}$.
\end{quote}
Recall that given $\M$ and $\hat{r}_{\vartheta, \varphi}$, we can compute the far-field pattern through a well-defined procedure: Fix a basis $B_\M$, compute the matrix $Z_\M$ and vectors $V_\M$ and $c_\M(\vartheta, \varphi)$, and solve the linear system $Z_\M I = V_\M$; then $e_\alpha \cdot \EFF(\vartheta, \varphi) = c_\M^T I$.
Therefore, the sub-mesh selection problem can be written formally as
\begin{maxi*}|s|
	{\scriptstyle \M' \subseteq \M}{|c_{\M'}^T I|}{}{}
	\label{eq: natural formulation}
	\tag{N}
	\addConstraint{Z_{\M'}I}{=V_{\M'}}{}
	\addConstraint{I}{\in \C^{B_{\M'}}.}{}
\end{maxi*}

Note that this optimization problem is not in the form of a standard mathematical program, since the constraint matrix and the dimensionality of the vector $I$ depend on the feasible solution $\M$.
Moreover, the objective function, being the absolute value of a complex number, is non-linear in $I$, and its dependence on $\M$ is not immediately clear.
Thus, it is not a priori obvious how to approach solving this problem.

\subsection{The Foundational Theorem}

In~\cite{hoeksma2026solving}, we exhibit a reformulation of Problem \eqref{eq: natural formulation} as an MISOCP.
This requires three crucial observations.
First, we find that the simulation data $Z_{\M'}$ and $I_{\M'}$ of a submesh $\M'\subseteq \M$ can be obtained from $Z_\M$ and $I_\M$ by removing the rows and columns that correspond to $T \setminus T'$.
Second, if any triangle in the support of a function $f \in B_\M$ is not present in a submesh $\M'$, then the vector $I$ corresponding to $\M'$ has $I_f = 0$.\footnote{
    This criterion has a simple physical interpretation: If a triangle is not present in $\M'$, then $\M'$ cannot support any current on this triangle.
}
Third, we replace the non-linear form $|c_\M^T I|$ by $\Re\{c_\M^T I\}$, by ensuring that $c_\M^T I$ is purely real in any optimal solution.
These observations lead to the following theorem.

\begin{theorem}[\cite{hoeksma2026solving}]\label{thm: base mip}
	Fix a mesh $\M = (\V, T)$, and let $Z = Z_\M$ and $V = V_\M$.
	The optimal objective value of the antenna design problem  $\max \{|c_{\M'}^T I| \, : \, \M'\subseteq \M, \, Z_{\M'}I = V_{\M'}, \, I \in \complex^{B_{\M'}}\}$ is equal to the optimal objective value of
	\begin{maxi*}|s|
		{x \in \{0, 1\}^{T(\M)}}{\Re\{c_\M^T I\}}{}{}
        \tag{M}
        \label{eq: base mip}
		\addConstraint{y_f = 1}{\implies Z_{f, *}I = V_f \cdot z}{\quad \forall f \in B_\M}
		\addConstraint{y_f = 0}{\implies I_f = 0}{\quad \forall f \in B_\M}
		\addConstraint{\prod_{t \in \supp(f)} x_t}{= y_f}{\quad \forall f \in B_\M}
		\addConstraint{|z|}{\leq 1}{\quad z \in \complex.}
	\end{maxi*}
\end{theorem}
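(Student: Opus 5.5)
The plan is to prove the two inequalities between the optimal value of \eqref{eq: natural formulation} (call it $\mathrm{OPT}_N$) and that of \eqref{eq: base mip} (call it $\mathrm{OPT}_M$) separately. Throughout I use the three observations stated before the theorem. First, $Z_{\M'}$ and $V_{\M'}$ are the principal submatrix and subvector of $Z_\M$, $V_\M$ indexed by the basis functions surviving in $\M'$. Second, these are exactly the $f$ with $\supp(f) \subseteq T'$, i.e.\ $y_f = \prod_{t\in\supp(f)} x_t = 1$. Third, $c_{\M'}$ is the restriction of $c_\M$ to these indices. Identify a submesh $\M'$ with its indicator vector $x \in \{0,1\}^{T(\M)}$, and write $S(x) = \{f : y_f = 1\}$.

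\textbf{$\mathrm{OPT}_M \ge \mathrm{OPT}_N$.} Take an optimal submesh $\M'$ with solution $I'$ of $Z_{\M'} I' = V_{\M'}$, and let $w = c_{\M'}^T I'$. If $w = 0$ the inequality is trivial, since $x$ with $z = 0$, $I = 0$ is feasible for \eqref{eq: base mip}. Otherwise set $z = \bar w/|w|$, so $|z| = 1$. Define $I \in \complex^{B_\M}$ by $I_f = z I'_f$ for $f \in S(x)$ and $I_f = 0$ otherwise. For $f \in S(x)$ we have
\[
Z_{f,*} I = \sum_{g \in S(x)} Z_{fg} z I'_g = z\,(Z_{\M'} I')_f = z V_f ,
\]
because the entries of $I$ outside $S(x)$ vanish. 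Hence all indicator constraints hold. Moreover $c_\M^T I = z w = |w|$, which is real, so the objective value of \eqref{eq: base mip} equals $|c_{\M'}^T I'|$.

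\textbf{$\mathrm{OPT}_M \le \mathrm{OPT}_N$.} Take a feasible $(x,y,I,z)$ for \eqref{eq: base mip} and let $\M'$ be the submesh selected by $x$. The indicator constraints say that $I_f = 0$ off $S(x)$ and $(Z_{\M'} I|_{S(x)})_f = z V_f$ on $S(x)$. So $I|_{S(x)}$ solves $Z_{\M'} I|_{S(x)} = z V_{\M'}$. By uniqueness of the BEM solution (invertibility of $Z_{\M'}$, which the paper assumes), $I|_{S(x)} = z I'$, where $I'$ is the solution of \eqref{eq: natural formulation} for $\M'$. Then
\[
\Re\{c_\M^T I\} \le |c_\M^T I| = |z|\,|c_{\M'}^T I'| \le |c_{\M'}^T I'| \le \mathrm{OPT}_N .
\]

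The only delicate step is this second direction. It needs invertibility of every principal submatrix $Z_{\M'}$, which is the implicit "unique solution" assumption of the BEM. It also needs the identification of $Z_{\M'}$ with a principal submatrix, which is taken from the first observation. Finally, both problems have finitely many mesh choices, and for each fixed $x$ the set of $z$ is compact, so both maxima are attained and the two inequalities give equality of optimal values.
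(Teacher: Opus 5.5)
Your proof is correct and follows essentially the same route as the paper. Your first direction is exactly the map $\phi_z$ that the paper attributes to the cited work: $x$ is the incidence vector of $\M'$, the currents are scaled by $z$ on $B_{\M'}$ and set to zero elsewhere, and you choose $|z|=1$ so that $c_\M^T I$ is real and nonnegative. Your second direction is its inverse $I_f=\tilde I_f/z$, combined with $\Re\{\cdot\}\le|\cdot|$ and $|z|\le 1$; your explicit use of invertibility of each $Z_{\M'}$ also covers the case $z=0$, which the paper's division by $z$ leaves implicit.
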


The following is easily shown from the form of Program \eqref{eq: base mip}.

\begin{corollary}
    \label{cor: objective real}
    In any optimal solution $(x^*, I^*)$ to Program \eqref{eq: base mip}, it holds that $c_\M^T I^*$ is real and non-negative.
\end{corollary}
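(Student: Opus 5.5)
The plan is a phase-rotation argument that exploits the scaling variable $z$ in Program~\eqref{eq: base mip}. The key observation is that, for fixed binary variables $x$ (and hence fixed $y$), every constraint of \eqref{eq: base mip} is invariant under multiplying the continuous variables by a unit complex number. Concretely, if $(x, y, I, z)$ is feasible and $\theta \in \real$, then $(x, y, e^{\jim\theta} I, e^{\jim\theta} z)$ is also feasible. To check this, note that the constraint $Z_{f,*}I = V_f z$ is complex-linear and homogeneous in $(I,z)$, so it is preserved. The constraint $I_f = 0$ is preserved trivially. The product constraints linking $x$ and $y$ do not involve $I$ or $z$. Finally, $|e^{\jim\theta} z| = |z| \le 1$.

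With this invariance in hand, I would argue by contradiction. Let $(x^*, I^*, z^*)$ be optimal and write $w = c_\M^T I^*$. Suppose $w$ is not real and non-negative. Then $w \neq 0$, and $\Re\{w\} < |w|$, since equality holds only when $w \ge 0$. Choose $\theta = -\arg w$ and set $\tilde I = e^{\jim\theta} I^*$ and $\tilde z = e^{\jim\theta} z^*$. By linearity, $c_\M^T \tilde I = e^{\jim\theta} w = |w|$, so the objective becomes $\Re\{c_\M^T\tilde I\} = |w| > \Re\{w\}$. This contradicts the optimality of $(x^*, I^*, z^*)$. Therefore $c_\M^T I^*$ must be real and non-negative.

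There is one more case to record: $w = 0$ is trivially real and non-negative. It is also worth noting that the optimal value is at least $0$, because $I = 0$, $z = 0$ is feasible for any $x$. This is consistent with the conclusion but is not needed for the argument.

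I do not expect a real obstacle here. The only point requiring care is confirming that every constraint in \eqref{eq: base mip} is homogeneous of degree one in $(I, z)$ jointly, with no constraint fixing the phase of $z$; indeed, the disk constraint $|z|\le 1$ is rotation-invariant. A second point is the interpretation of the indicator constraints: these are not linearized via big-$M$ bounds on real and imaginary parts in the statement. Even if they were, a symmetric big-$M$ box would not in general be invariant under rotation. So I would state the argument for the formulation as written, with logical implications.
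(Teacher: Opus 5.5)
Your phase-rotation argument is correct, and it is the argument the paper intends when it says the corollary is ``easily shown from the form of Program \eqref{eq: base mip}''. For fixed $x$, every constraint is invariant under $(I,z)\mapsto(e^{\jim\theta}I,\,e^{\jim\theta}z)$. Rotating $c_\M^T I^*$ onto the non-negative real axis would therefore strictly increase $\Re\{c_\M^T I\}$ unless $c_\M^T I^*$ already lies on that axis. The paper gives no further detail, so your write-up makes this intended argument explicit.
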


Program \eqref{eq: base mip} of \Cref{thm: base mip} is an MISOCP with only a single convex quadratic constraint; such programs can be solved effectively by many standard MIP solvers.
Unfortunately, the model contains many indicator constraints.
Solvers typically  reformulate indicator constraints as big-$M$ constraints, but this relies on the solver being able to infer sufficiently strong bounds on the variables involved in the indicator constraints~\cite{gurobi,williams1990model}.
This is generally not the case for Program \eqref{eq: base mip}, so without strengthening the program further it is only suitable for very simple antenna design problems.

\section{Physics-Informed Cuts}
\label{section: physicscuts}

Some of the most powerful laws of physics are the \emph{conservation laws}, which specify that a certain quantity remains constant in over time. 
When analyzing physical systems, conservation laws are often a very useful tool, allowing one to deduce properties of the system from these laws alone.
Since Program \eqref{eq: base mip} heuristically has the laws of electromagnetism as its constraints, one may ask whether explicitly adding more physical laws, in particular conservation laws, provides stronger formulations.
Showing that this is indeed the case is the main goal of this paper.

More specifically, we apply the law of conservation of energy for electromagnetic systems, and from it derive variable bounds and additional convex constraints, which significantly strengthen the MISOCP of \Cref{thm: base mip}.
These bounds and constraints each depend on an upper bound $P_\mathrm{max}$ on the energy that enters the antenna per unit time.
A finite value for $P_\mathrm{max}$ can be found from the impedance constraints that are derived in \Cref{sec: impedance}.
All of this follows from the laws of physics; we therefore regard these constraints as physics-informed cuts.

\subsection{Energy Conservation}
\label{sec: energy conservation}

For the convenience of the reader, we now state the theory of energy conservation for electromagnetic systems.
The result from electromagnetics that we need is a version of \textit{Poynting's Theorem}~\cite[eq. 1.9.6]{orfanidisElectromagneticWavesAntennas2016}, which relates the flow of electromagnetic energy out of a volume $V$ through its boundary $\partial V$ to the electric fields and their generated currents within this volume:
\begin{equation}
    \frac{1}{\mu_0}\int_{\partial V} \Re\left\{E \times \overline{B}\right\}\cdot n dA = -\int_V \Re\left\{J \cdot \overline{E}\right\} dx, \label{equation: poynting}
\end{equation}
Here $n$ is an outward normal vector for $\Sigma$, the surface of the antenna. Recall that $E, B$ respectively are the electric and magnetic field and $J$ is the current density.
We now massage this version of Poynting's Theorem into a form that makes it practically usable for inclusion in the MIP from \Cref{thm: base mip}. Specifically, we get rid of the magnetic field $B$ and obtain an expression only in $\EFF, J$ and $\Einc$ (the incident electric field), and then rewrite this into an expression in the MIP variables.

Evaluating the left integral of \eqref{equation: poynting} for $\Eout$ (the electric field produced by the antenna) on a sphere of radius $R$ containing $\Sigma$ yields
\begin{multline}
    \frac{1}{\mu_0}\int_{-\pi}^\pi\int_0^\pi R^2\Re\left\{\Eout(R\hat{r}_{\vartheta, \varphi}) \times \overline{\Bout(R\hat{r}_{\vartheta, \varphi})}\right\}\cdot \hat{r}_{\vartheta, \varphi} \sin \vartheta ~\dd\varphi \dd\vartheta \\= -\int_{\Sigma} \Re\left\{J \cdot \overline{\Eout}\right\} \dd x, 
\end{multline}
where $B^{out}$ is the magnetic field generated by $J$, and we use that, for our antenna problem, currents only run on $\Sigma$.
We also need the following fact:
\begin{fact}
\label{fact: no tangential electric field}
    The electric field on the boundary of a perfect electrical conductor has no tangential component~\cite{balanisAntennaTheoryAnalysis2012,davidsonComputationalElectromagneticsRF2010}, which implies that, on $\Sigma$:
    \[(\Eout)_{||} +(\Einc)_{||} = 0,\]
    where $(\Eout)_{||}$ is the component of $\Eout$ tangential to $\Sigma$.
\end{fact}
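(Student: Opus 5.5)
We will treat \Cref{fact: no tangential electric field} as a consequence of Maxwell's equations in time-harmonic form together with the defining property of a perfect electrical conductor (PEC), rather than merely citing it. The plan has three steps. First, we show that the \emph{total} electric field vanishes in the interior of a PEC. Second, we transport this to the surface $\Sigma$ using the continuity of the tangential electric field across an interface. Third, we split the total field into the incident and scattered parts.

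\textbf{Step 1 (interior field).} We model the conductor by Ohm's law $J = \sigma E$ and let $\sigma \to \infty$. Since the current density, or in the surface limit the surface current, must stay physically finite, we need $E = J/\sigma \to 0$ inside the conductor. Hence in the PEC idealisation the total field $E = \Eout + \Einc$ is identically zero in the conductor's interior. For an infinitely thin patch $\Sigma$, we instead view $\Sigma$ as the limit of a slab of vanishing thickness and apply the same argument inside the slab.

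\textbf{Step 2 (tangential continuity).} We use Faraday's law $\nabla \times E = -\jim\omega B$ in integral form. The loop is a thin rectangle straddling $\Sigma$, with its long sides of length $\ell$ parallel to a tangent direction $\tau$ on either side of the interface, and its short sides of length $h$. Stokes' theorem gives that the circulation of $E$ around the loop equals $-\jim\omega$ times the flux of $B$ through it. Let $h \to 0$ first. The flux term is bounded by $\omega \ell h \sup|B|$, so it vanishes, provided $B$ stays bounded near $\Sigma$. This holds because surface currents cause only a finite jump in tangential $B$, not a singularity. The contributions of the short sides also vanish. Dividing by $\ell$ and letting $\ell \to 0$ then yields $\tau \cdot E^{+} = \tau \cdot E^{-}$ at every point of $\Sigma$. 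By Step 1, the interior side is zero, so $E_{\|} = 0$ on $\Sigma$.

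\textbf{Step 3 (splitting).} Maxwell's equations are linear, so the total field is $\Eout + \Einc$. Here $\Einc$ is the field that is present without the induced currents, and $\Eout$ is the field radiated by $J$. Taking tangential components gives $(\Eout)_{\|} + (\Einc)_{\|} = 0$ on $\Sigma$.

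The main obstacle is making the limits rigorous rather than heuristic. These are the limit $\sigma \to \infty$, and the fact that $B$ remains bounded near $\Sigma$ even though currents concentrate on a surface. This is worst near the edges of the patch, where the fields are known to be singular. We would handle this by proving the identity only at interior points of $\Sigma$, where local boundedness of $B$ holds. The edge set has surface measure zero, so this suffices for its use in the integral $\int_\Sigma \Re\{J\cdot\overline{\Eout}\}\,\dd x$ in Poynting's theorem. For the idealised setting of the paper, we would accept the PEC model as the definition, as in the cited references.
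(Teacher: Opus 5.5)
Your route differs from the paper's, which gives no derivation at all. The paper adopts the vanishing of the total tangential field on a PEC as the defining boundary condition of the idealized model, cites it, and gets the displayed identity only by writing the field as $\Eout+\Einc$, which is your Step~3. This is the same condition that underlies the operator equation $\mathcal{L}J=(\Einc)_\parallel$ discretized by the BEM.

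Your Steps~1--2 reconstruct the standard textbook justification: a field-free conductor interior, plus tangential continuity of $E$ from Faraday's law. Done properly, this gives a physical explanation and a quantitative error estimate for real metals.

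Treating the condition as a modelling fact buys coverage of all of $\Sigma$. In the paper's transmitting setting, $\Einc$ is the impressed delta-gap field rather than a free-space wave, so the physical field is $\Eout$ alone. On the feed gap the identity is then the definition of the source (the prescribed gap voltage), not a consequence of your Steps~1--2 as written. Your Step~3 reading of $\Einc$ as ``the field present without the induced currents'' is the scattering picture.

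One step needs repair. In Step~1, the premise that the current density stays finite as $\sigma\to\infty$ is false. The current crowds into a skin layer of depth $\delta=\sqrt{2/(\omega\mu_0\sigma)}$, and near the surface $|J|\approx\sqrt{2}\,|K|/\delta\propto\sqrt{\sigma}$, where $K$ is the finite surface current density. Finiteness of $K$ alone also does not plug into $E=J/\sigma$, since $K$ is not a volume density.

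The conclusion survives, via the surface impedance. On the surface, $E_\parallel=Z_sK$ with $|Z_s|=\sqrt{2}\,R_s$ and $R_s=\sqrt{\omega\mu_0/(2\sigma)}$, the same surface resistance used in \Cref{sec: heatingbounds}. Hence $E_\parallel=O(\sigma^{-1/2})\to 0$ directly on $\Sigma$. This has two advantages over your version:
\begin{itemize}
\item It gives the boundary value itself, rather than an interior limit that Step~2 must then carry to the surface (which silently interchanges the limit $\sigma\to\infty$ with the approach to $\Sigma$).
\item It shows that, for the finite-conductivity metal assumed in \Cref{sec: heatingbounds}, the identity holds only up to an error of order $R_s|K|$.
\end{itemize}
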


Using \Cref{fact: no tangential electric field}, we can replace the right hand side: 
\begin{multline}
    \frac{1}{\mu_0}\int_{-\pi}^\pi\int_0^\pi R^2\Re\left\{\Eout(R\hat{r}_{\vartheta, \varphi}) \times \overline{B^{out}(R\hat{r}_{\vartheta, \varphi})}\right\}\cdot \hat{r}_{\vartheta, \varphi} \sin \vartheta ~\dd\varphi \dd\vartheta \\= \int_{\Sigma} \Re\left\{J \cdot \overline{\Einc}\right\} \dd x.
\end{multline}
Now we use the following well-known property of the far-field pattern~\cite[sec. 15.9]{orfanidisElectromagneticWavesAntennas2016}:
\begin{fact}
\label{fact: far-field}
    In the far-field region, the magnetic and electric field are each tangential to a sphere centered at their source. Furthermore, they are orthogonal and proportional by a factor $\eta_0 = \sqrt{\frac{\mu_0}{\epsilon_0}}$, so that
    \begin{align}
        &\lim_{R\rightarrow\infty} R^2\Re\left\{\Eout(R\hat{r}_{\vartheta, \varphi}) \times \overline{B^{out}(R\hat{r}_{\vartheta, \varphi})}\right\} \\
        =~&\lim_{R\rightarrow\infty} \frac{1}{\eta_0}\left(R^2 \|\Eout(R\hat{r}_{\vartheta, \varphi})\|^2\right) = \frac{1}{\eta_0}\|\EFF(\vartheta, \varphi)\|^2.
    \end{align}
\end{fact}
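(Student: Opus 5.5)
This is a standard physical property of radiation fields, so the plan is to derive it from the asymptotic structure of the fields generated by a bounded current distribution rather than treat it as a bare citation. Write the outgoing field via the free-space Green's function: with $k=\omega\sqrt{\mu_0\epsilon_0}$, the vector potential is
\[
A(\vec r)=\frac{\mu_0}{4\pi}\int_\Sigma J(\vec r')\frac{e^{-\jim k\|\vec r-\vec r'\|}}{\|\vec r-\vec r'\|}\dd x',
\]
and $\Bout=\nabla\times A$. The field $\Eout$ then follows from the time-harmonic Amp\`ere law, $\Eout=\frac{1}{\jim\omega\mu_0\epsilon_0}\nabla\times\Bout$. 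The first step is to expand for $R=\|\vec r\|\to\infty$ with $\vec r=R\hat r$. Using $\|\vec r-\vec r'\|=R-\hat r\cdot\vec r'+O(1/R)$, valid because $\Sigma$ is bounded, one gets
\[
A(R\hat r)=\frac{\mu_0}{4\pi}\frac{e^{-\jim kR}}{R}\,N(\hat r)+O(R^{-2}),
\qquad
N(\hat r)=\int_\Sigma J(\vec r')e^{\jim k\hat r\cdot\vec r'}\dd x'.
\]

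The second step is to differentiate this expansion. To leading order in $1/R$, every $\nabla$ acts only on the phase $e^{-\jim kR}$, so $\nabla\mapsto -\jim k\hat r$. This gives
\[
\Bout=-\jim k\,\hat r\times A+O(R^{-2}),
\qquad
\Eout=\frac{-\jim k}{\jim\omega\mu_0\epsilon_0}\,\hat r\times\Bout+O(R^{-2}),
\]
that is, $\Eout=-c\,\hat r\times\Bout+O(R^{-2})$ with $c=1/\sqrt{\mu_0\epsilon_0}$. Tangency follows at once: $\Bout\perp\hat r$ to leading order, and hence so is $\Eout$. Orthogonality of $\Eout$ and $\Bout$ is also immediate, and $\|\Eout\|=c\|\Bout\|$ to leading order.

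The third step is the Poynting computation. For tangential $B$, the identity $E\times\overline B=-c(\hat r\times B)\times\overline B=c\|B\|^2\hat r$ holds. Substituting $\|B\|=\|E\|/c$ gives $\frac1c\|E\|^2\hat r$ at leading order. The prefactors $\frac1{\mu_0}$ from Poynting's theorem and $\frac1c$ combine as $\frac{1}{\mu_0 c}=\sqrt{\epsilon_0/\mu_0}=1/\eta_0$, which is the claimed $\frac1{\eta_0}$. Multiplying by $R^2$, the leading terms are $O(R^{-1})\cdot O(R^{-1})\cdot R^2=O(1)$ and all cross terms with the remainders are $O(R^{-1})$. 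Passing to the limit and using the definition $\EFF=\lim R\,\Eout$, which exists by the first step, gives the stated identity. The factor $\frac1{\mu_0}$ in the statement's first line is presumably absorbed as in \eqref{equation: poynting}; I would state the constant carefully in terms of $\eta_0$.

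The main obstacle is making the error terms uniform in $(\vartheta,\varphi)$. Uniformity matters because the fact is used inside the angular integral before taking $R\to\infty$. I would bound the $O(R^{-2})$ remainders by constants depending only on $\operatorname{diam}\Sigma$, $k$, and $\int_\Sigma\|J\|$, using a Taylor remainder of $e^{-\jim k\rho}/\rho$ in $\rho=\|\vec r-\vec r'\|$, which is uniform for $\vec r'\in\Sigma$ and $R\geq 2\operatorname{diam}\Sigma$. With uniform bounds, dominated convergence then justifies exchanging the limit and the integral.
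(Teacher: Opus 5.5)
The paper does not prove this Fact; it quotes it from \cite{orfanidisElectromagneticWavesAntennas2016}. Your proposal reproduces the standard radiation-zone derivation given there: expand the vector potential via the radiation vector $N(\hat r)$, replace $\nabla\mapsto-\jim k\hat r$, and obtain $\Bout\approx-\jim k\,\hat r\times A$ and $\Eout\approx-c\,\hat r\times\Bout$. This is correct, and it supplies two things the paper leaves implicit.

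\emph{Uniformity.} Your remainder bounds are uniform in direction. That is exactly what the paper needs, and silently assumes, when it lets $R\to\infty$ under the angular integral to reach \eqref{eq: poynting far field}. Uniform convergence on the compact sphere already suffices, so dominated convergence is not even needed.

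\emph{Constants.} You work with $B$ rather than $H=B/\mu_0$, so $\|\Eout\|=c\|\Bout\|$ with $c=1/\sqrt{\mu_0\epsilon_0}$; the ratio is not $\eta_0$. The display as printed is therefore off by a factor $\mu_0$: literally, $R^2\Re\{\Eout\times\overline{\Bout}\}\cdot\hat r\to\frac{1}{c}\|\EFF\|^2=\frac{\mu_0}{\eta_0}\|\EFF\|^2$. The factor $\frac{1}{\eta_0}$ appears only after you include the $\frac{1}{\mu_0}$ from \eqref{equation: poynting} and dot the vector with $\hat r$. Your reading of the constant is the right one. Carried forward, it gives the prefactor $\frac{1}{\mu_0 c}=\frac{1}{\eta_0}$ in \eqref{eq: poynting far field}.

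\textbf{Existence of the limit.} You write that $\lim_{R\to\infty}R\,\Eout(R\hat r)$ ``exists by the first step''; this is false as written. Your first step gives $R\,\Eout(R\hat r)=e^{-\jim kR}F(\hat r)+O(R^{-1})$ with $F$ independent of $R$. Because of the oscillating phase $e^{-\jim kR}$, this has no limit unless $F(\hat r)=0$. What your expansion does show is that $\lim_{R\to\infty}R\,e^{\jim kR}\,\Eout(R\hat r)$ exists, and $\EFF$ should be defined that way; the paper's own definition has the same defect. Since $|e^{\jim kR}|=1$, you still get $R^2\|\Eout\|^2\to\|\EFF\|^2$, and nothing downstream changes.

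\textbf{Orthogonality.} ``Orthogonal'' must be read bilinearly, as $\Eout\cdot\Bout=0$ (equivalently, for the real time-domain fields). The Hermitian product is, to leading order, $\Eout\cdot\overline{\Bout}=-c\,\hat r\cdot(\Bout\times\overline{\Bout})$, which is nonzero for circular polarization. Your Poynting step is unaffected, because it uses only $\hat r\cdot\overline{\Bout}=0$.
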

Together, this yields the following equation for energy conservation:
\begin{equation}
\label{eq: poynting far field}
    c_0\int_{-\pi}^\pi\int_0^\pi \|\EFF(\vartheta, \varphi)\|^2 \sin \vartheta ~\dd\varphi \dd\vartheta = \int_{\Sigma} \Re\left\{J \cdot \overline{\Einc}\right\} \dd x,
\end{equation}
where $c_0$ is the speed of light in a vacuum, defined as 
\[
    c_0 = \frac{1}{\sqrt{\mu_0\epsilon_0}} = \frac{1}{\mu_0\eta_0}.
\]
Using \Cref{eq: poynting far field} we can derive a valid inequality for the program of \Cref{thm: base mip}.

\begin{theorem}
\label{lem: energy conservation}
    Let $(x, I, z) \subseteq \{0, 1\}^{B_\M} \times \complex^{B_\M} \times \complex$ be feasible for Program \eqref{eq: base mip}.
    Then the radiated energy equals the input power, that is, 
    \[
        I^H R I = \Re\{(Vz)^H I\}.
    \]
    Here, the (self-adjoint) radiation matrix $R \in \complex^{B_\M \times B_\M}$ with entries indexed by basis functions
    \[
        R_{f,g} = c_0\int_{\mathbb{S}^2} \sum_{\alpha \in \{x, y, z\}}\overline{c^\alpha_f(r)} \cdot c^\alpha_g (r) \dd{r}
    \]
    is positive semi-definite.
    Consequently, if the input power $\Re\{(Vz)^H I\}$ is bounded from above by $P_\mathrm{max}$, then $I^H R I \leq P_\mathrm{max}$ is a valid convex inequality for Program \eqref{eq: base mip}.
\end{theorem}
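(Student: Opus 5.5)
The plan is to discretize both sides of the energy-conservation identity \eqref{eq: poynting far field} using the BEM representation, and then read off positive semi-definiteness and the convex inequality directly from the form of the left-hand side.

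\textbf{Left-hand side.} Given a feasible $(x,I,z)$, the current is $J=\sum_f I_f f$, supported on the selected submesh $\M'$. Write $c^\alpha(r)\in\complex^{B_\M}$ for the far-field coefficient vector of the component $e_\alpha$. Then $e_\alpha\cdot\EFF(r)=c^\alpha(r)^T I=\sum_g c^\alpha_g(r) I_g$. Because the components $x,y,z$ are orthonormal,
\[
\|\EFF(r)\|^2=\sum_\alpha \Bigl|\sum_g c^\alpha_g(r) I_g\Bigr|^2=\sum_{f,g}\overline{I_f}\Bigl(\sum_\alpha\overline{c^\alpha_f(r)}c^\alpha_g(r)\Bigr)I_g .
\]
Integrating over $\mathbb{S}^2$ against $\sin\vartheta\,\dd\varphi\,\dd\vartheta$ and multiplying by $c_0$ turns the left side of \eqref{eq: poynting far field} into exactly $I^HRI$; integral and finite sum may be exchanged since the $c^\alpha_f$ are continuous on a compact set. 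Since $c_0>0$, the same computation gives $w^HRw=c_0\int\sum_\alpha|c^\alpha(r)^Tw|^2\,\dd r\ge 0$ for every $w$. Together with $\overline{R_{g,f}}=R_{f,g}$, which is immediate from the definition, this shows $R$ is self-adjoint and PSD.

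\textbf{Right-hand side.} This step needs $\int_\Sigma \Re\{J\cdot\overline{\Einc}\}\,\dd x=\Re\{(Vz)^HI\}$. By the BEM (Galerkin) construction, the excitation vector entries are the tested incident field, $V_f=\int_\Sigma f\cdot\Einc\,\dd x$, up to the conjugation convention of \cite{hoeksma2026solving}. Since only tangential components enter, this also matches \Cref{fact: no tangential electric field}. Expanding $J$ then gives $\int_\Sigma J\cdot\overline{\Einc}=\sum_f I_f\overline{V_f}$ for the nominal excitation. The source in Program \eqref{eq: base mip} is scaled by $z$, so the effective excitation is $Vz$. Only $f$ with $y_f=1$ contribute, because $I_f=0$ otherwise, and these are exactly the basis functions of $\M'$. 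Hence the sum equals $(Vz)^HI$, and taking real parts gives the identity $I^HRI=\Re\{(Vz)^HI\}$.

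\textbf{Consequence and main obstacle.} The final claim is then immediate. If $\Re\{(Vz)^HI\}\le P_\mathrm{max}$, then $I^HRI\le P_\mathrm{max}$, and this is a convex quadratic constraint because $R\succeq 0$. It can be written as a second-order cone constraint via a factorization $R=L^HL$. The main obstacle is the right-hand side: the identity requires that the BEM solution of the submesh, with excitation $Vz$, actually be the physical current, so that the continuous Poynting identity applies to it. The convention linking $V_f$ to $\Einc$ must also be matched so the conjugates land correctly. The first thing to try is to invoke \Cref{thm: base mip}: a feasible point with $z$ corresponds to the submesh problem with source scaled by $z$, whose solution by linearity is $zI_{\M'}$. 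The identity is then asserted at the discretized level, treating the BEM approximation of \eqref{eq: poynting far field} as exact, as the paper implicitly does.
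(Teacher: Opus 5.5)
Your proposal is correct and follows essentially the same route as the paper. Both arguments expand the two sides of the far-field Poynting identity in the BEM basis to obtain $I^HRI$ and $\Re\{(Vz)^HI\}$, and both read off positive semi-definiteness from $w^HRw = c_0\int_{\mathbb{S}^2}\sum_\alpha |c^\alpha(r)^Tw|^2\,\dd r \ge 0$. The only difference is cosmetic: the paper treats $z=0$ separately and, for $z\neq 0$, pulls the solution back to the unscaled sub-mesh problem via $\phi_z$ before rescaling by $z\bar z$, whereas you apply the identity directly to the $z$-scaled excitation, which covers $z=0$ uniformly given uniqueness of the sub-mesh solution.
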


\begin{proof}
    Let $z \in \complex$ with $|z| \leq 1$.
    First, observe that if $z = 0$, then $\tilde{I} = 0$, and so the identity from the lemma holds trivially.
    Assume then henceforth that $z \neq 0$. 
    
    In~\cite{hoeksma2026solving} it is shown that for any such $z$, the following function $\phi_z$ transforms a solution $(\M', I)$ to the antenna optimization problem \eqref{eq: natural formulation} to a solution $(x, \tilde I, z)$ to Program \eqref{eq: base mip}.
    A mesh $\M' \subseteq \M$ is mapped to a binary vector $x \in \{0, 1\}^T$ by setting $x_t = 1$ if and only if $t \in T(\M')$; that is, $x$ is the incidence vector of the sub-mesh $\M'$ of $\M$.
    A vector $I \in \complex^{\M'}$ is mapped to $\tilde I \in \complex^{B_\M}$ by setting 
    \[
        \tilde I_f = \begin{cases} 
            I_f \cdot z & \text{if $f \in B_{\M'}$}, \\
            0 & \text{otherwise}.
        \end{cases}
    \]
    For a solution $(x, \tilde I ,z)$ to Program \eqref{eq: base mip}, let $(\M', I) = \phi_z^{-1}(x, \tilde I)$ be given by
    \[
        \M' = (\V, \{t \in T(\M) \, : \, x_t = 1\})
    \]
    and $I_f = \tilde I_f/z$ for each $f \in B_{\M'}$.
    Then $\phi_z$ allows us to derive the inequality from the theorem for an arbitrary sub-mesh $\M'$ of $\M$.

    Let $\M' \subseteq \M$ and let $(\M', I)$ be feasible for Program \eqref{eq: natural formulation}, so that $Z_{\M'} I = V_{\M'}$.
    Let $J = \sum_{f \in B_{\M'}} I_f f$.
    From \Cref{eq: poynting far field},
    \[
        c_0\int_{\mathbb{S}^2} \overline{E_{\text{FF}}}(x) \cdot E_{\text{FF}}(x) \dd{x} = \int_{\Sigma}\Re\left\{J \cdot \overline{E^\mathrm{inc}}\right\}\dd{x},
    \]
    with $E_{\text{FF}}$ the far-field electric field as defined in \Cref{section: electromagnetism}.
    Recall also from \Cref{section: electromagnetism} that the $\alpha$-component of the field generated by the current density $J$ is given by $e_\alpha \cdot E_\text{FF} = \sum_{f \in B_{\M'}} c^\alpha_f I_f$, and so
    \begin{align*}
        &c_0\int_{\mathbb{S}^2} \overline{E_{\text{FF}}}(x) \cdot E_{\text{FF}}(x) \dd{x}
            \\ =~&c_0\sum_{f \in B_{\M'}} \sum_{g \in B_{\M'}}\overline{I_f} I_g
            \int_{\mathbb{S}^2} \sum_{\alpha \in \{x, y, z\}}  \overline{c^\alpha_f(x)} \cdot c^\alpha_g(x) \dd{x} = I^H R^{\M'} I.
    \end{align*}
    where $R^{\M'}_{fg} = \int_{\mathbb{S}^2} \overline{c_f(x)} \cdot c_g(x) \dd{x}$ for $f, g \in B_{\M'}$.
    
    Observe that $R^{\M'}$ is positive semi-definite for any $\M'$, since $\int_{\mathbb{S}^2} \overline{E_\text{FF}}(x) \cdot E_\text{FF}(x) \dd{x}$ is non-negative for any set of expansion coefficients $I$.
    Inserting $J = \sum_{f \in B_{\M'}} I_f f$ into $\int_\Sigma \Re\left\{J\cdot \overline{E^\mathrm{inc}}\right\}\dd{x}$ yields
    \[
        \int_\Sigma J(x) \cdot \overline{E}^\text{inc}(x) \dd{x} = \sum_{f \in B_{\M'}} I_f \int_\Sigma
            f(x) \cdot \overline{E}^\text{inc}(x) \dd{x} = V_{\M'}^H I,
    \]
    since the vector $V_{\M'}$ has entries $V_{\M', f} = -\int_\Sigma f(x) \cdot E^\text{inc}(x) \dd{x}$ for each $f \in B_{\M'}$ (cf.~\cite [Section 2.3]{hoeksma2026solving}).
    Putting the two parts together, we find
    \[
        I^H R^{\M'} I = \Re\{V_{\M'}^H I\}.
    \]

    Now, we only need to translate this identity to $(x, \tilde{I}, z) = \phi_z(\M', I)$.
    From the construction of $\phi_z$, for any $f \in B_{\M'}$, we have $\tilde{I}_f = I_f\cdot z$, while for $f \notin B_{\M'}$  we have $I_f = 0$.
    Thus, letting $R \in \complex^{B_\M}$ be the matrix from the lemma statement, we have
    \begin{align*}
        I^H R^{\M'} I  &= \sum_{f \in B_{\M'}}\sum_{g \in B_{\M'}} \overline{I_f} R^{\M'}_{f, g} I_g
            = 
            \frac{1}{z\overline{z}} \cdot
            \sum_{f \in B_\M}\sum_{g \in B_\M} \overline{\tilde{I}_f} R_{f, g} \tilde{I}_g \\
            &=  
            \Re\left\{\sum_{f \in B_{{\M'}}} \overline{V_{\M', f}} I_f\right\} = \Re\{V^H \tilde{I} / z \}.
    \end{align*}
    Therefore,
    \[
        \tilde{I}^H R \tilde{I} = z\overline{z} \cdot \Re\left\{
            V^H \tilde{I}/z 
        \right\}
        =
        \Re\left\{V^H \tilde{I} \cdot \frac{z\overline{z}}{z}\right\}
            = \Re\left\{
                (Vz)^H \tilde{I}
            \right\},
    \]
    completing the proof. 
\end{proof}

While the inequality $I^H R I \leq P_\mathrm{max}$ holds for any $I$ derived from an integer feasible solution to Program \eqref{eq: base mip}, it may be violated for fractional solutions.
This occurs because $I$ may longer satisfy $Z_{\M'} I = V_{\M'}$ for any submesh $\M'$, and thereby the link between the antenna geometry $\M$ and the current density $J = \sum_{f \in B_\M} I_f f$ is lost.

Consider then an optimal fractional solutions with $I^H R I > P_\mathrm{max}$.
Since \Cref{lem: energy conservation} is a discrete form of the law of energy conservation, such a solution can be interpreted as an unphysical current density that radiates more energy than is supplied to the antenna.
By adding the convex inequality $I^H R I \leq P_\mathrm{max}$ to Program \eqref{eq: base mip}, solvers can exclude such unphysical fractional solutions a priori.

\subsection{Current Decomposition}
\label{sec: decomposition}

While the energy conservation cut derived in \Cref{sec: energy conservation} is rather useful on its own, it also motivates us to consider which components of the current coefficients $I$ contribute significantly to the objective function and to the radiated energy.
As discussed in \Cref{section: electromagnetism}, the current density $J$ on the surface of an antenna radiates electromagnetic energy; however, not all current densities radiate~\cite{schab2018energy}.

In terms of the current coefficients $I \in \complex^{B_\M}$, we can decompose these into two terms,
\[
    I = I_\parallel + I_\perp,
\]
with the property that $|c_\M(\vartheta, \varphi)^T I| = |c_\M(\vartheta, \varphi)^I I_\parallel|$ for any $(\vartheta, \varphi)$.
Under some circumstances, this yields a non-trivial decomposition; the key is the form of the matrix $R$ from \Cref{lem: energy conservation}.

\begin{lemma}
\label{lem: ker R}
    Let $R \in \complex^{B_\M \times B_\M}$ be the self-adjoint matrix of \Cref{lem: energy conservation}, and assume that for each $\alpha \in \{x, y, z\}$, the function $c^\alpha_\M : \mathbb{S}^2 \to \complex^{B_\M}$ is continuous.
    Then $I_\perp \in \ker R$ if and only if $c^\alpha_\M(\vartheta, \varphi)^T I_\perp = 0$ for all $(\vartheta, \varphi)$.
\end{lemma}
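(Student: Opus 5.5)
The proof rests on writing the quadratic form of $R$ as an integral of squared moduli. For any $I \in \complex^{B_\M}$ the definition of $R$ gives
\[
    I^H R I = c_0 \int_{\mathbb{S}^2} \sum_{\alpha \in \{x,y,z\}} \bigl|c^\alpha_\M(r)^T I\bigr|^2 \dd{r},
\]
exactly as in the computation in the proof of \Cref{lem: energy conservation}. We then use the standard fact that a positive semi-definite Hermitian matrix satisfies $R I = 0$ if and only if $I^H R I = 0$.

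\emph{Reverse direction.} Suppose $c^\alpha_\M(\vartheta,\varphi)^T I_\perp = 0$ for all $(\vartheta,\varphi)$ and all $\alpha$. Then every entry of $R I_\perp$ is $c_0\int_{\mathbb{S}^2}\sum_\alpha \overline{c^\alpha_f(r)}\,\bigl(c^\alpha_\M(r)^T I_\perp\bigr)\dd{r} = 0$. Hence $I_\perp \in \ker R$ directly, without using semi-definiteness.

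\emph{Forward direction.} Let $I_\perp \in \ker R$. Then $I_\perp^H R I_\perp = 0$, so
\[
    \int_{\mathbb{S}^2} \sum_\alpha \bigl|c^\alpha_\M(r)^T I_\perp\bigr|^2 \dd{r} = 0 .
\]
Each function $r \mapsto |c^\alpha_\M(r)^T I_\perp|^2$ is non-negative. It is also continuous, because $c^\alpha_\M$ is continuous by assumption and $I_\perp$ is fixed. A non-negative continuous function with zero integral against the surface measure $\sin\vartheta\,\dd\varphi\,\dd\vartheta$ must vanish identically. Otherwise it would be bounded away from zero on an open neighbourhood of some point, and every nonempty open subset of $\mathbb{S}^2$ has positive surface measure, which would make the integral positive. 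Therefore $c^\alpha_\M(\vartheta,\varphi)^T I_\perp = 0$ everywhere, for each $\alpha$.

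The only delicate step is this continuity argument. Without continuity we would get vanishing only almost everywhere, which is why the hypothesis is stated. One should also check that the surface measure charges open sets. This holds because the poles, where $\sin\vartheta = 0$, form a null set and open sets contain points away from them. We also note that $c_0>0$, so the constant does not affect either direction.
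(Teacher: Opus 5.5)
Your proof is correct and follows essentially the same route as the paper. The reverse direction exchanges sum and integral in $(RI_\perp)_f$, and the forward direction writes $I_\perp^H R I_\perp = 0$ as the integral of $\sum_\alpha |c^\alpha_\M(r)^T I_\perp|^2$ (the paper phrases this as $\int_{\mathbb{S}^2}\|\EFF^\perp\|^2$) and uses continuity to upgrade almost-everywhere vanishing to vanishing everywhere; your justification via ``every nonempty open subset of the sphere has positive surface measure'' is in fact slightly cleaner than the paper's argument through continuity of $\|\EFF^\perp\|^2\sin\vartheta$.
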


\begin{proof}
    First, assume that for each $\alpha \in \{x, y, z\}$ we have $c^\alpha_\M(\vartheta, \varphi)^T I_\perp = 0$ for all $(\vartheta, \varphi)$.
    Then
    \begin{align*}
        (RI_\perp)_f &= \sum_{g \in B_\M} R_{f,g} I_{\perp, g} = \sum_{g \in B_\M} \left(\int_{\mathbb{S}^2} \sum_{\alpha \in \{x, y, z\}} \overline{c^\alpha_f(x)} \cdot c^\alpha_g (x) \dd{x}\right) I_{\perp, g} \\
        &= \int_{\mathbb{S}^2} \sum_{\alpha \in \{x, y, z\}} \overline{c^\alpha_f(x)} \cdot \left(\sum_{g \in B_\M} c^\alpha_g(x)I_{\perp, g}\right) \dd{x}
        = 0,
    \end{align*}
    since $\sum_{g \in B_\M} c^\alpha_g(x) I_{\perp, g} = c^\alpha_\M(\vartheta,\varphi)^T I_\perp = 0$.
    This proves one direction.

    Now for the other direction, assume that $I_\perp \in \ker R$.
    Then $I^H_\perp R I_\perp = 0$.
    From the proof of \Cref{lem: energy conservation}, we know that
    \[
        I^H_\perp R I_\perp = \int_{\mathbb{S}^2}  \|\EFF^\perp(x)\|^2 \dd{x},
    \]
    where $\EFF^\perp$ is the far-field electric field generated by the current density $J_\perp = \sum_{f \in B_\M} I_{\perp, f} f$.
    We also know that
    \[
        e_\alpha \cdot \EFF^\perp(\vartheta, \varphi) = c^\alpha_\M(\vartheta, \varphi)^T I_\perp.
    \]
    Therefore, the integral above can only vanish if for each $\alpha \in \{x, y, z\}$, it holds that $c^\alpha_\M(\vartheta, \varphi)^T I$ vanishes everywhere except for possibly a set of measure zero.
    Since each $c^\alpha_\M(\vartheta,\varphi)$ is a continuous function, $\|\EFF^\perp(\vartheta, \varphi)\|^2\sin\vartheta$ is also a continuous function.
    This implies that $\EFF^\perp (\vartheta,\varphi)\sin\theta$ vanishes everywhere, and therefore that $\EFF^\perp$ vanishes everywhere.
\end{proof}

\Cref{lem: ker R} shows that, whenever $\dim \ker R > 0$, there exist non-trivial current densities that do not contribute to the objective function $\Re\{c_\M^T I\}$; and, on the other hand, all such current densities can be recovered from $\ker R$.
This suggests an alternative formulation of Program \eqref{eq: base mip}.
The below formulation is not the most compact: one can easily eliminate $I$ by using the appropriate coordinate transformation matrices, thus obtaining an equivalent representation with $2|B_\M|$ fewer variables.

\begin{theorem}
\label{thm: decomposed formulation}
    Let $R$ be as in \Cref{lem: energy conservation}.
    Let $B_\parallel$ be a basis for $\rowsp R$ and $B_\perp$ a basis for $\ker R$.
    Then the program of \Cref{thm: base mip} is equivalent to
    \begin{maxi*}|s|
        {\scriptstyle x \in \{0, 1\}^{T(\M)}}{\Re\{c_\M^T I_\parallel\}}{}{}
            \label{eq: decomposed formulation}
            \tag{D} 
        \addConstraint{y_f = 1}{\implies Z_{f, *}I = V_f \cdot z}{\quad \forall f \in B_\M}
        \addConstraint{y_f = 0}{\implies I_f = 0}{\quad \forall f \in B_\M}
        \addConstraint{\prod_{t \in \supp(f)} x_t}{= y_f}{\quad\forall f \in B_\M}
        \addConstraint{|z|}{\leq 1}{\quad z \in \complex}
        \addConstraint{I}{= I_\parallel + I_\perp}{}
        \addConstraint{u^T I_\parallel}{= 0}{\quad \forall u \in B_\perp}
        \addConstraint{u^T I_\perp}{=0}{\quad\forall u \in B_\parallel.}
    \end{maxi*}
    
\end{theorem}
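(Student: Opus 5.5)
The plan is to show that Program \eqref{eq: decomposed formulation} is Program \eqref{eq: base mip} with an auxiliary linear split of $I$ added, and that this split leaves both the feasible set (projected onto $(x,y,I,z)$) and the objective value unchanged. Concretely, I will exhibit a map from feasible solutions of \eqref{eq: base mip} to feasible solutions of \eqref{eq: decomposed formulation} with the same objective value, and check that dropping $(I_\parallel, I_\perp)$ maps back. Equality of the optimal values then follows.

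First I settle the linear algebra. Since $R$ is self-adjoint and positive semi-definite by \Cref{lem: energy conservation}, we have $\complex^{B_\M} = \operatorname{range} R \oplus \ker R$, and $\operatorname{range} R = (\ker R)^\perp$ with respect to the Hermitian inner product. The row space of $R$ coincides with $\overline{\operatorname{range} R}$, so the orthogonality conditions are naturally read with conjugation. I will interpret the constraints $u^T I_\parallel = 0$ for $u \in B_\perp$ and $u^T I_\perp = 0$ for $u\in B_\parallel$ in the way that makes them say exactly that $I_\parallel \in (\ker R)^\perp = \operatorname{range} R$ and $I_\perp \in (\operatorname{range} R)^\perp = \ker R$. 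If the basis vectors are taken as conjugated rows, this is just the matching convention.

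With that reading, the constraints $u^T I_\parallel = 0$ and $u^T I_\perp = 0$ together with $I = I_\parallel + I_\perp$ determine $(I_\parallel, I_\perp)$ uniquely for every $I$, namely as the orthogonal projections of $I$. So every feasible $(x,y,I,z)$ of \eqref{eq: base mip} extends uniquely to a feasible point of \eqref{eq: decomposed formulation}. Conversely, every feasible point of \eqref{eq: decomposed formulation} restricts to a feasible point of \eqref{eq: base mip}, because the remaining constraints are identical.

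For the objective, I use \Cref{lem: ker R}: since $I_\perp \in \ker R$, we get $c^\alpha_\M(\vartheta,\varphi)^T I_\perp = 0$ for every direction and component. In particular $c_\M^T I_\perp = 0$, where $c_\M$ is the $e_\alpha$-component at the target direction, itself a linear combination of the $c^\beta_\M$. Hence $\Re\{c_\M^T I\} = \Re\{c_\M^T I_\parallel\}$ on corresponding solutions, and both the optimal values and the optimizers agree.

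The main obstacle is the conjugation convention for complex bases of the row space and kernel. The rest is immediate once \Cref{lem: ker R} is available. A secondary point is that $c_\M$ for an arbitrary unit vector $e_\alpha$ is a linear combination of the $x,y,z$ components; I will note this explicitly.
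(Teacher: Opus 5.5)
Your proposal is correct and takes the paper's route. The paper's entire proof is the remark that the theorem follows directly from \Cref{lem: ker R}. Your write-up supplies the details that remark leaves implicit: the split $I = I_\parallel + I_\perp$ is uniquely determined by $I$, the remaining constraints coincide with those of Program \eqref{eq: base mip}, and $c_\M^T I_\perp = 0$ makes the two objectives agree.

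Your explicit treatment of the conjugation convention goes beyond the paper and is justified. For a general Hermitian $R$, the space $\rowsp R = \overline{\operatorname{range} R}$ need not be complementary to $\ker R$ under the unconjugated pairing $u^T v$. Reading the constraints as $I_\parallel \in \operatorname{range} R$ and $I_\perp \in \ker R$ is what makes the equivalence hold. The paper tacitly identifies $\rowsp R$ with the range of $R$, which is valid when $R$ is real.
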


This theorem follows directly from \Cref{lem: ker R}.
While this reformulation has the same strength as the one from \Cref{thm: base mip}, it does allow us to obtain some other useful properties of the antenna design formulation.
Chief among these is \emph{variable bounds}.

In the decomposed program \Cref{eq: decomposed formulation} only $I_\parallel$ enters the objective function.
Deriving valid bounds for $I_\parallel$ thus ensures a valid dual bound.
By restricting the matrix $R$ to its row space, such bounds can be readily obtained, under the assumption of bounded radiated power $P_\mathrm{max}$.
This assumption is justified in \Cref{sec: impedance}.

\begin{lemma}
\label{lem: variable bounds}
    Let $R$ be as in \Cref{lem: energy conservation}, and let $I_\parallel \in \rowsp R$.
    Let $F \in \complex^{B_\M \times \rowsp R}$ be a matrix whose columns form an orthonormal basis for the row space of $R$, and let $R_\parallel = F^T R F$.
    If $P_\mathrm{max}$ is an upper bound to the input power of the antenna, 
    then for any vector $v \in \complex^{\rowsp R}$,
    \[
        |v^H I_\parallel| \leq \sqrt{v^H  R_\parallel^{-1} v\cdot P_\mathrm{max}}. 
    \]
    In particular, on choosing an orthonormal basis $B_\parallel$ for $\rowsp R$, for any $e_f \in B_\parallel$ we have
    \[
        |e_f^T I_\parallel| = |I_{\parallel, f}| \leq \sqrt{R_{\parallel, ff}^{-1}P_\mathrm{max}}.
    \]
\end{lemma}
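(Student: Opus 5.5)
The plan is a weighted Cauchy--Schwarz argument in the coordinates of $\rowsp R$. Write $I_\parallel = F w$ with $w = F^H I_\parallel \in \complex^{\rowsp R}$. This is possible since the columns of $F$ form an orthonormal basis of $\rowsp R$; I read $F^T$ in the statement as $F^H$, which agrees with it when $F$ is real. Then
\[
    I_\parallel^H R I_\parallel = w^H R_\parallel w .
\]
The matrix $R_\parallel$ is Hermitian positive semi-definite by \Cref{lem: energy conservation}. It is in fact positive definite: $R$ is self-adjoint, so $\rowsp R$ is the orthogonal complement of $\ker R$ and is invariant under $R$. Hence $R$ restricted to $\rowsp R$ has trivial kernel, and $R_\parallel$ is invertible with a Hermitian positive definite square root $R_\parallel^{1/2}$.

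Next I bound the quadratic form by $P_\mathrm{max}$. By \Cref{lem: ker R} (or directly because $R I_\perp = 0$ and $R$ is Hermitian), the cross terms vanish, so $I^H R I = I_\parallel^H R I_\parallel$ for $I = I_\parallel + I_\perp$ as in \Cref{thm: decomposed formulation}. Then \Cref{lem: energy conservation}, together with the assumption that the input power is at most $P_\mathrm{max}$, gives $w^H R_\parallel w \le P_\mathrm{max}$.

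Now take $v \in \complex^{\rowsp R}$, expressed in the same coordinates, so that $v^H I_\parallel$ means $v^H w$. Writing $v^H w = (R_\parallel^{-1/2} v)^H (R_\parallel^{1/2} w)$ and applying Cauchy--Schwarz gives
\[
    |v^H w|^2 \le (v^H R_\parallel^{-1} v)(w^H R_\parallel w) \le v^H R_\parallel^{-1} v \cdot P_\mathrm{max}.
\]
Taking square roots gives the first claim. For the second claim, choose $F$ so that its columns are the orthonormal basis $B_\parallel$, and take $v = e_f$. Then $v^H R_\parallel^{-1} v = (R_\parallel^{-1})_{ff}$, which is the stated bound on $|I_{\parallel,f}|$.

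The main obstacle is bookkeeping rather than analysis. One must identify $I_\parallel$ with its coordinate vector $F^H I_\parallel$ (the statement mixes $B_\M$-coordinates and $\rowsp R$-coordinates), use $F^H$ in place of $F^T$, and check that $R_\parallel$ is invertible. A second point is that $P_\mathrm{max}$ bounds the input power only for integer-feasible $(x,I,z)$. The bound is therefore valid in the sense of a valid inequality for Program \eqref{eq: decomposed formulation}, which is the same sense in which \Cref{lem: energy conservation} is used.
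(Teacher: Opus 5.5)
Your proposal is correct and follows essentially the same route as the paper: it bounds $I_\parallel^H R_\parallel I_\parallel = I^H R I \le P_\mathrm{max}$ using energy conservation (with $R I_\perp = 0$), then applies Cauchy--Schwarz to $v^H I_\parallel = (R_\parallel^{-1/2} v)^H (R_\parallel^{1/2} I_\parallel)$. Your version is slightly more careful than the paper's: you justify positive definiteness of $R_\parallel$ through $\ker R \cap \rowsp R = \{0\}$, and you state explicitly that $F^T$ must be read as $F^H$, which the paper's identity $I_\parallel^H F^T R F I_\parallel = (F I_\parallel)^H R (F I_\parallel)$ tacitly requires.
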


\begin{proof}
    First, note that $R_\parallel$ is a linear map with $\Dom R_\parallel = \Im R_\parallel = \rowsp R$.
    Therefore, for any $I_\parallel \in \rowsp R \setminus \{0\}$, we have
    \[
        I_\parallel^H R_\parallel I_\parallel > 0;
    \]
    that is, $R_\parallel$ is positive definite.
    Moreover, observe:
    \[
        I_\parallel^H R_\parallel I_\parallel = I_\parallel^H F^T R F I_\parallel
         = (FI_\parallel)^H R (FI_\parallel) = I^H R I \leq P_\mathrm{max},
    \]
    for any $I \in \complex^{B_\M}$ such that $I = FI_\parallel + I_\perp$, with $I_\perp \in \ker R$.
    Here, the last inequality follows from \Cref{lem: energy conservation}.

    Now we consider $|v^H I_\parallel|$.
    Applying the Cauchy--Schwarz inequality, we find
    \begin{align*}
        |v^H I_\parallel|^2 &= |v^H R_\parallel ^{-1/2} R^{1/2}_\parallel I_\parallel|^2
            \leq \|v^H R_\parallel^{-1/2}\|^2 \cdot \|R_\parallel^{1/2} I_\parallel\|^2\\
            &= v^H (R^{-1/2})^H R^{-1/2} v \cdot I_\parallel^H R_\parallel I_\parallel 
            \leq |v^H R_\parallel^{-1} v| P_\mathrm{max},
    \end{align*}
    where we use the fact that any positive-definite matrix has an invertible square root and the fact that $R_\parallel$ is self-adjoint.
\end{proof}

This lemma also yields as a corollary an immediate dual bound on the objective value of \eqref{eq: base mip}.

\begin{corollary}
\label{cor: dual bound}
    Let $F$ and $P_\mathrm{max}$ be as in \Cref{lem: variable bounds} and let $c_\M^\parallel = Fc_\M$.
    Then the objective value of \eqref{eq: base mip} is bounded from above by
    \[
        \sqrt{|(c^\parallel_\M)^T R_\parallel^{-1} \overline{(c^\parallel_\M)}|P_\mathrm{max}}.
    \]
\end{corollary}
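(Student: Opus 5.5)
The plan is to reduce the corollary to a single application of \Cref{lem: variable bounds}, with $v$ chosen as the conjugate of the projected objective vector.

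First, I move to the decomposed formulation. By \Cref{thm: decomposed formulation}, the optimal value of \eqref{eq: base mip} equals that of \eqref{eq: decomposed formulation}. In the decomposed program the objective is $\Re\{c_\M^T I_\parallel\}$, because $c_\M^T I_\perp = 0$ by \Cref{lem: ker R}. It is therefore enough to bound $|c_\M^T I| = |c_\M^T I_\parallel|$ for every feasible $I$, since $\Re\{w\}\le |w|$.

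Next, I express the objective in the coordinates of $\rowsp R$. Write $I_\parallel = F\hat I$ with $\hat I\in\complex^{\rowsp R}$; this is possible because the columns of $F$ are an orthonormal basis of the row space. Then
\[
c_\M^T I_\parallel = (F^T c_\M)^T \hat I = v^H \hat I, \qquad v := \overline{F^T c_\M}.
\]
Up to the paper's convention for writing $Fc_\M$, this $F^Tc_\M$ is exactly the vector $c^\parallel_\M$ of the statement. The bound $\hat I^H R_\parallel \hat I = I^H R I \le P_\mathrm{max}$ comes from \Cref{lem: energy conservation}, and this is the same step used inside the proof of \Cref{lem: variable bounds}.

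Applying \Cref{lem: variable bounds} with this $v$ gives
\[
|c_\M^T I| \le \sqrt{v^H R_\parallel^{-1} v\, P_\mathrm{max}} = \sqrt{(c^\parallel_\M)^T R_\parallel^{-1}\overline{c^\parallel_\M}\,P_\mathrm{max}}.
\]
This is the claimed expression. The absolute value inside the root is harmless, because $R_\parallel^{-1}$ is positive definite and so the quadratic form is real and nonnegative. \Cref{cor: objective real} also confirms that the optimal value is nonnegative, so bounding its modulus bounds the value itself.

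The main obstacle is notational rather than mathematical: the transposes and conjugations have to line up. $R_\parallel$ is defined as $F^T R F$ rather than $F^H R F$, and $c^\parallel_\M$ as $Fc_\M$. I would handle this by working with $F^H R F$ and $F^T c_\M$ throughout, and noting that this gives exactly the stated form $c^T R_\parallel^{-1}\bar c$, with conjugation consistent with $v^H R_\parallel^{-1} v$.
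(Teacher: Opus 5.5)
Your proposal is correct and is essentially the paper's argument. The paper gives no separate proof: it presents the bound as an immediate consequence of \Cref{lem: variable bounds} with $v = \overline{c^\parallel_\M}$, combined with $c_\M^T I_\perp = 0$ from \Cref{lem: ker R} and $\Re\{w\} \le |w|$. Your reading of $R_\parallel$ as $F^H R F$ and of $c^\parallel_\M$ as $F^T c_\M$ is also the right way to resolve the notation, since only under that reading does the identity $\hat I^H R_\parallel \hat I = I^H R I$ used in the lemma's proof actually hold.
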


Although this dual bound is implied already by \Cref{lem: energy conservation}, some solvers may not be able to prove the bound from their specific implementation of convex quadratic constraints.
Thus, adding the (rather cheap) cut of \Cref{cor: dual bound} can be beneficial.

\begin{remark}
    The formulation of \Cref{thm: decomposed formulation} can be especially useful in cases where $\ker R$ is nontrivial.
    In the case of an antenna described by an open surface with a surface-supported current density of finite extent, $\ker R$ is always trivial; this follows from the injectivity of the electric field integral operator~\cite[Theorem 3.2]{buffa2003electric} and Rellich's Lemma~\cite[Lemma 2.9]{colton2015inverse}.
    
    Two important cases where $\ker R$ is nontrivial are volumetric current densities~\cite{devaney1973radiating} and infinite antenna arrays~\cite{schab2022sparsity,schab2023scattering}, which can both be modelled using the same framework as described in \Cref{section: electromagnetism} and in~\cite{hoeksma2026solving}.
    Even when $\ker R$ is trivial, $R$ can be numerically singular, in which case \Cref{thm: decomposed formulation} can still be useful to extract finite dual bounds via \Cref{lem: variable bounds,cor: dual bound}.
\end{remark}

\subsection{Physically-Motivated Variable Bounds}
\label{sec: heatingbounds}

\Cref{lem: variable bounds} provides exact upper bounds on the current density coefficients.
However, these bounds are rather loose.
For the patch antenna that we consider in \Cref{section: computation}, the largest values reach up to $10^{11}$ A/m when the input voltage is $1 \, \mathrm{V}$.
These bounds are so large that most solvers are not able to use them to effectively reformulate the indicator constraints of Program \eqref{eq: base mip}.

There are no fundamental physical limits on the current density at a single point on a perfectly electrically conducting antenna~\cite{bladel1992singular}.
Nevertheless, we can derive stronger bounds than those implied by \Cref{lem: variable bounds} by once again invoking the conservation of energy, and adding the physically-motivated assumption that the antenna we optimize has some finite electrical conductivity $\sigma > 0$.
After all, a current on a real material loses some energy to heat, and the greater the current, the more heat is produced.
(This is known as \emph{Ohmic} or \emph{Joule heating}~\cite{pozar2012microwave}.)

Since we approximate the current density by a weighted sum of basis functions, each supported on a nonzero area, we can find bounds on the current density coefficients $I_f$ for each basis function.
Let $f \in B_\M$ be a basis function with associated expansion coefficient $I_f$.
For an ohmic material, $J = \sigma E$, 

For a current density $J: \Sigma \to \complex^3$ on a metallic surface $\Sigma$ with surface resistance $R_s$, the power lost to heat is~\cite[Eq.\ 1.131]{pozar2012microwave}
\[
    Q = \frac{R_s}{2}\int_\Sigma |J(x)|^2 \dd x.
\]
The surface resistance $R_s$ can be computed to great accuracy for good conductors.
For a conductor with conductivity $\sigma$, a time-harmonic current of frequency $f$ experiences a sheet resistance of~\cite[Eq.\ 1.125]{pozar2012microwave}
\[
    R_s = \sqrt{\frac{2\pi \mu_0 f}{2\sigma}}.
\]
While an approximation, this formula is very accurate in practice.

If the current density is given as a linear combination of basis functions $f \in B_\M$ with expansion coefficients $I \in \C^{B_\M}$, then
\[
    Q = \frac{R_s}{2}\sum_{f, g \in B_{\M}} \overline{I_f}I_g \int_\Sigma f(x) \cdot g(x) \dd x = \frac{R_s}{2} \cdot I^H G I,
\]
where
\[
    G_{fg} =  \int_\Sigma f(x) \cdot g(x) \dd x, \quad f, g \in B_\M.
\]
This matrix $G$ is a Gram matrix and therefore positive semidefinite.

By the law of conservation of energy, the power dissipated as heat cannot exceed the input power.
Therefore, if $P_\mathrm{max} > 0$ is an upper bound to the input power, the expansion coefficients $I$ satisfy
\[
    I^H G I \leq 2P_{\max}/R_s.
\]
Since $G$ is positive semidefinite, we can solve the following problem for each $I_f$ to find bounds on these coefficients:
\begin{maxi*}|s|
    {I\in \C^{B_\M}}{|I_f|}{}{}
    \addConstraint{I^H G I}{\leq 2P_{\max}/R_s}{}.
\end{maxi*}

Adding additional (valid) constraints to this program can improve the variable bounds.
However, for our purposes this simple program already suffices.
Similarly to the procedure of \Cref{lem: variable bounds}, we then obtain the tight bounds
\[
    |I_f| \leq \sqrt{\frac{2P_\mathrm{max}(G^{-1})_{ff}}{R_s}}.
\]
This procedure still produces unrealistically high bounds (in the range of 168-645 A/m for the patch antenna that we consider in \Cref{section: computation}), but they are small enough that Program \eqref{eq: base mip} can be effectively reformulated using big-$M$ constraints.

\section{Constraints from Design Specifications}
\label{section: constraints}

The MIP derived in \Cref{thm: base mip} is an exact formulation for finding the sub-mesh $\M' \subseteq \M$ that maximizes the far-field amplitude in a fixed direction, subject to a fixed incident electric field.
In practical cases however, various other design considerations must be taken into account to produce a useful antenna design.
We outline three such design considerations: Geometric constraints, impedance constraints and polarization constraints.

Geometric constraints are easily realizable as linear constraints on a graph representation of $\M$.
Impedance and polarization constraints, on the other hand, require some care: Implemented naively, they introduce again some non-convexity into the problem, but with a similar approach as in \Cref{thm: base mip} we can formulate them exactly as linear constraints.

\subsection{Geometric Constraints}
\label{sec: geometric constraints}

In~\cite{hoeksma2026solving}, we introduced a simple set of constraints to ensure that the selected mesh $\M' \subseteq \M$ forms a connected region. 
To this aim, we constructed the graph $G_\M$ with vertex set $V(G_\M) = T$ and edge set
\[
    E(G_\M) = \{\{t_i, t_j\} : \text{$t_i$ and $t_j$ share an edge in $\M$}\}.
\]
Connectivity of a sub-mesh $\M'$ can then be ensured through a set of flow constraints on $G_\M$.
Moreover, $G_\M$ gives us a great deal of flexibility to impose additional geometric constraints on Program \eqref{eq: base mip}.

One particularly important geometric constraint involves point connections.
In electromagnetic simulations the physical meaning of two pieces of material meeting only in a single point is ambiguous, and should therefore be avoided~\cite{thiel2015point}.
This situation may occur due to triangle deletion, as in \Cref{fig: bad mesh}.

\begin{figure}
    \centering
    \begin{tikzpicture}[scale=0.5]

        \foreach \x in {-4, -2, 0, 2, 4} {
            \foreach \y in {-2, 0, 2} {
                \coordinate (c) at (\x, \y);
            
                \foreach \i in {1,...,4} {
                    \coordinate (v\i) at ({\x+sqrt(2)*cos(\i*360/4+45)},{\y+sqrt(2)*sin(\i*360/4+45)});
                }
                \foreach \i in {1,...,3} {
                    \pgfmathsetmacro{\j}{\i+1}
                    \draw[fill, pec] (v\i) -- (v\j) -- (c);
                    \draw (v\i) -- (v\j) -- (c);
                }
                \draw[fill, pec] (v4) -- (v1) -- (c);
                \draw[black] (v4) -- (v1) -- (c);
            
            }
        }
        
        \coordinate (c) at (0, 0);
        
        \foreach \i in {1,...,4} {
            \coordinate (v\i) at ({sqrt(2)*cos(\i*360/4+45)},{sqrt(2)*sin(\i*360/4+45)});
        }
        \draw[fill, white] (v1) -- (v2) -- (c);
        \draw[fill, white] (v3) -- (v4) -- (c);
        \draw[black] (v1) -- (v2) -- (c);
        \draw[black] (v3) -- (v4) -- (c);
        
    \end{tikzpicture}
    
    \caption{A mesh with an electromagnetically ambiguous point connection, where the tips of two mesh triangles meet.}
    \label{fig: bad mesh}
    
\end{figure}

For a mesh $\M' \subseteq \M$, we call a vertex $v$ of $\M$ a \emph{point connection} if $v$ the sub-mesh generated by keeping only triangles that contain $v$ is disconnected.
This corresponds exactly to the situation in \Cref{fig: bad mesh}.
Let $G_\M$ be as before.
If $x$ is an integer feasible solution of $P_1$ of \Cref{thm: base mip}, then a vertex $v$ of $\M$ is a point connection if and only if and only if the subgraph of $G_\M$ induced by those triangles that contain $v$ consists of at least two disconnected components.
Therefore, the following geometric constraints exclude point connections:
\begin{quote}
    For each vertex $v \in \V(\M)$, the subgraph of $G_\M$ induced by those $t \in T(\M)$ that contain $v$ is connected.
\end{quote}
Let $d_\M(v)$ be the degree of $v \in \V(\M)$ in $\M$.
Then this constraint can be implemented using $O\left(\sum_{v \in \V(\M)} d_\M(v)\right)$ linear constraints.

We note that a similar formulation was used by Neuman et al.~\cite{neuman2026towards}.
However, they use a nonlinear graph-algebraic parameter as a penalty term in a genetic algorithm.
In contrast, our formulation is linear and exact.

\subsection{Impedance}
\label{sec: impedance}
In~\cite{hoeksma2026solving}, we introduced impedance constraints in our model. Recall that the input impedance of an antenna is defined as 
\begin{align}
\label{eq: impedance definition}
    Z_\mathrm{in} := \frac{V_\mathrm{in}}{I_\mathrm{feed}},
\end{align}
and the feed current $I_\mathrm{feed}$ can be found as a weighted sum of current coefficients:
\[
    I_\mathrm{feed} = c_\mathrm{feed}^T I.
\]
Recall also that $Z_{\mathrm{in}}$ is a complex number, and that we ideally wish for $Z_{\mathrm{in}}$ to equal exactly $50 \, \Omega$.

In~\cite{hoeksma2026solving}, we introduced a method to constrain the magnitude of the input impedance through the single second-order cone constraint 
\begin{align}
\label{eq: impedance lower bound}    
    |Z_\mathrm{in}| \geq |Z_0| \iff |I_\mathrm{feed}| \leq \frac{V_\mathrm{in}}{|Z_0|}.
\end{align}
While we have found this constraint to be sufficient in many practical use cases, we have also encountered cases where this constraint is not sufficient to guarantee a well-matched antenna, as it allows for a large imaginary and small real part of the impedance. 

Therefore, we introduce a way to separately constrain the real and imaginary part of the impedance.
This requires some care.
Let
\[
    R_\mathrm{in} = \Re\{Z_\mathrm{in}\}, \quad X_\mathrm{in} = \Im\{Z_\mathrm{in}\}
\]
denote the \emph{input resistance} and the \emph{input reactance}, respectively.
Then naively, one could add the constraints
\begin{align*}
    R_\mathrm{min} &\leq R_\mathrm{in} \leq R_\mathrm{max}, \\
    X_\mathrm{min} &\leq X_\mathrm{in} \leq X_\mathrm{max}.
\end{align*}
This is however non-convex in the current variables $I$, since $I_\mathrm{feed} = c_\mathrm{feed}^T I$.

Notice that, if $\Im\{I_\mathrm{feed}\} = 0$, then \Cref{eq: impedance definition} can be rewritten as
\begin{align*}
    R_\mathrm{in} \Re\{I_\mathrm{feed}\} +  \jim X_\mathrm{in} \Re\{I_\mathrm{feed}\} = V_\mathrm{in}.
\end{align*}
This then decomposes into two separate linear equations---the real and imaginary parts of the above---which involve only $R_\mathrm{in}$ or $X_\mathrm{in}$, allowing us to formulate linear constraint equations on these quantities in terms of $I_\mathrm{feed}$.
We can then use a similar trick as in~\cite{hoeksma2026solving} to find a solution where $I_\mathrm{feed}$ is real.

\begin{theorem}
\label{lem: impedance rectangular}
    Let $c_\mathrm{feed} \in \complex^{B_\M}$ and $V_\mathrm{feed} > 0$, and let $X_\mathrm{min} \leq X_\mathrm{max}$ and $0 < R_\mathrm{min} \leq R_\mathrm{max}$.
    If Program \eqref{eq: base mip} is augmented with the auxiliary variables $J \in \complex^{B_\M}$ and $a, b \in \real$, and the constraints
    %
    \begin{align*}
        \label{eq: impedance rectangular}
        \tag{I1}
        y_f = 1 \implies &Z_{f,*} J = (a + \jim b)V_f, \\
        y_f = 0 \implies &J_f = 0, \nonumber \\
        \Im\{c_\mathrm{feed}^T J\} &= 0, \nonumber \\
        \Re\{c_\mathrm{feed}^T J\} &= 1, \nonumber \\
        R_\mathrm{min} \leq V_\mathrm{in} a &\leq R_\mathrm{max} , \nonumber \\
        X_\mathrm{min} \leq V_\mathrm{in} b &\leq X_\mathrm{max}, \nonumber
    \end{align*}
    then any integer feasible solution $(x, I, z)$ of the resulting program satisfies
    \begin{align*}
        \tag{Z}
        R_\mathrm{min} \leq \Re\left\{\frac{V_\mathrm{in} z}{c_\mathrm{feed}^T I}\right\} \leq R_\mathrm{max} \quad \text{and} \quad
        X_\mathrm{min} \leq \Im\left\{\frac{V_\mathrm{in} z}{c_\mathrm{feed}^T I}\right\} \leq X_\mathrm{max}.
    \end{align*}
    Conversely, if an integer feasible solution $(x, I)$ of Program \eqref{eq: base mip} satisfies (Z), then there exist $a, b \in \complex$ and $J \in \complex^{B_\M}$ that satisfy the constraints (\ref{eq: impedance rectangular}).
\end{theorem}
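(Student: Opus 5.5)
Both directions rest on one linearity fact about Program \eqref{eq: base mip}. Fix an integer feasible $x$, and hence a fixed $y$ and sub-mesh $\M'$. The constraints on $I$ then say exactly that $I$ is supported on $B_{\M'}$ and that $Z_{\M'} I_{\M'} = z V_{\M'}$. As in the proof of \Cref{lem: energy conservation}, $Z_{\M'}$ is invertible (the BEM system has a unique solution), so the solution set is the line $I = z I^{(0)}$. Here $I^{(0)}$ is the zero-extension of the unique solution of $Z_{\M'} I = V_{\M'}$. In the same way, the first two constraints of (\ref{eq: impedance rectangular}) force $J = (a + \jim b) I^{(0)}$, because they are the same indicator system with $z$ replaced by $w := a+\jim b$. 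Both directions then reduce to scalar algebra with $I_0 := c_\mathrm{feed}^T I^{(0)}$.

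\emph{Forward direction.} From $c_\mathrm{feed}^T J = 1$ (real part $1$, imaginary part $0$) we get $w I_0 = 1$. Hence $I_0 \neq 0$, $w \neq 0$, and $V_\mathrm{in}/I_0 = V_\mathrm{in} w = V_\mathrm{in} a + \jim V_\mathrm{in} b$. The program's variables give $c_\mathrm{feed}^T I = z I_0$, so $V_\mathrm{in} z / (c_\mathrm{feed}^T I) = V_\mathrm{in}/I_0$ whenever $z \neq 0$. The real and imaginary parts of this quantity are $V_\mathrm{in} a$ and $V_\mathrm{in} b$, which satisfy the bounds in (\ref{eq: impedance rectangular}), and (Z) follows. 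The main obstacle is the degenerate case $z = 0$: then $I = 0$ and the quotient in (Z) is $0/0$. I would handle it by reading (Z) as a statement about the impedance of the selected geometry, which is independent of $z$ and equal to $V_\mathrm{in}/I_0$, so that (Z) holds for every $z \neq 0$. Equivalently, one may restrict to $z\neq 0$, as in the proof of \Cref{lem: energy conservation}. I would add a sentence making this convention explicit.

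\emph{Converse.} Suppose $(x,I,z)$ satisfies (Z). Then $c_\mathrm{feed}^T I = z I_0 \neq 0$, which forces $z \neq 0$ and $I_0 \neq 0$. Set $w := 1/I_0$, $a := \Re w$, $b := \Im w$ (these are real, so in particular in $\complex$), and $J := w I^{(0)} = I/(z I_0)$. Then $J$ is supported on $B_{\M'}$ and satisfies $Z_{f,*}J = (a+\jim b)V_f$ for every $f$ with $y_f = 1$. It also satisfies $c_\mathrm{feed}^T J = 1$, which gives the two feed constraints. Finally $V_\mathrm{in} w = V_\mathrm{in} z / (c_\mathrm{feed}^T I)$, so (Z) is exactly the pair of bounds on $V_\mathrm{in} a$ and $V_\mathrm{in} b$. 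The only point to check carefully is that the indicator constraints for $J$ coincide with those for $I$ after rescaling. This holds because they share the same $y$ and are linear and homogeneous in the pair (current, excitation scalar).
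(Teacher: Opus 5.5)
Your proof is correct and follows essentially the same route as the paper. Both arguments use that $I$ and $J$ solve the same indicator-constrained linear system with excitations $z$ and $a+\jim b$, so they are proportional. This gives $V_\mathrm{in}z/(c_\mathrm{feed}^T I) = V_\mathrm{in}(a+\jim b)$ in the forward direction, and the converse normalizes $I$ by its feed current.

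Your bookkeeping of $z$ is tidier than the paper's. You flag the degenerate case $z=0$, which the paper passes over. In the converse, your choice $a+\jim b = z/(c_\mathrm{feed}^T I)$ satisfies $Z_{f,*}J=(a+\jim b)V_f$ for general $z$, whereas the paper's choice $1/(c_\mathrm{feed}^T I)$ tacitly takes $z=1$.
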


\begin{proof}
    Let $(I, J, a, b, z)$ be feasible for Program \eqref{eq: base mip} with the added constraints from the lemma statement. 
    First, we argue that $a \neq 0$ in any feasible solution.
    Indeed, if $a = 0$, then
    \[
        0 < R_\mathrm{min} \leq V_\mathrm{in} a = 0,
    \]
    a contradiction.
    This implies that $a + \jim b \neq 0$.
    Now, we have $Z_\M I = V_\M z$ and $Z_\M J = (a + \jim b) V_\M$, and so $I = J z / (a + \jim b)$.
    Therefore,
    \[
        \frac{V_\mathrm{in}}{c^T_\mathrm{feed} I} = \frac{(a+\jim b)V_\mathrm{in}}{c^T_\mathrm{feed}Jz} = \frac{(a + \jim b)V_\mathrm{in}}{z},
    \]
    since $\Re\{c_\mathrm{feed}^T J\} = 1$ and $\Im\{c_\mathrm{feed}^T J\} = 0$.
    Then
    \begin{align*}
        \Re\left\{\frac{V_\mathrm{in}z}{c^T_\mathrm{feed} I}\right\} 
        = \Re\left\{(a+\jim b)V_\mathrm{in}\right\}
        = a V_\mathrm{in}
        \intertext{and}
        \Im\left\{\frac{V_\mathrm{in}z}{c^T_\mathrm{feed} I}\right\} 
        = \Im\left\{(a+\jim b)V_\mathrm{in}\right\}
        = b V_\mathrm{in}.
    \end{align*}
    Since any feasible solution satisfies $R_\mathrm{min} \leq a V_\mathrm{in} \leq R_\mathrm{max}$ and $X_\mathrm{min} \leq b V_\mathrm{in} \leq X_\mathrm{max}$, one direction is proved.

    For the converse direction, suppose a solution $(x, I)$ of Program \eqref{eq: base mip} satisfies
    \begin{align*}
        R_\mathrm{min} \leq \Re\left\{\frac{V_\mathrm{in}z}{c_\mathrm{feed}^T I}\right\} \leq R_\mathrm{max} \quad \text{and} \quad
        X_\mathrm{min} \leq \Im\left\{\frac{V_\mathrm{in}z}{c_\mathrm{feed}^T I}\right\} \leq X_\mathrm{max}.
    \end{align*}
    Let $I_f = c^T_\mathrm{feed} I$.
    The above inequalities imply that $I_f \neq 0$.
    Then, let $a + \jim b = \frac{1}{I_f}$, and define $J = (a + \jim b)I$.
    Then clearly $c_\mathrm{feed}^T J = 1$ by construction, $J$ satisfies the first two constraints of \Cref{eq: impedance rectangular}, and
    \[
        V_\mathrm{in}a = V_\mathrm{in}\Re\left\{\frac{c_\mathrm{feed}^T J}{c_\mathrm{feed}^T I}\right\}
         = \Re\left\{
            \frac{V_\mathrm{in}}{c_\mathrm{feed}^T I}
         \right\}.
    \]
    Therefore, $a$ satisfies $R_\mathrm{min} \leq V_\mathrm{in} a \leq R_\mathrm{max}$.
    A similar analysis shows $b$ also satisfies $X_\mathrm{min} \leq V_\mathrm{in}b \leq X_\mathrm{max}$.
\end{proof}

\Cref{lem: impedance rectangular} shows that, despite its non-linearity in the current variables $I$, the input impedance $Z_\mathrm{in}$ can be constrained using purely linear inequalities---although at the cost of introducing $|B_\M|+2$ auxiliary variables.

These constraints are exact and reasonably efficient, but they are not the unique linear approach.
The following theorem shows an alternative formulation of the impedance constraints, which we find performs slightly better in practice.

\begin{theorem}
\label{lem: impedance wedge}
    Let $c_\mathrm{feed}, V_\mathrm{feed}, X_\mathrm{min}, X_\mathrm{max}, R_\mathrm{min}$ and  $R_\mathrm{max}$ be as in \Cref{lem: impedance rectangular}.
    If Program \eqref{eq: base mip} is augmented with the auxiliary variables $J \in \complex^{B_\M}$, $a, b \in \real$ and $u \in \{0, 1\}$, and the constraints
    \begin{align*}
        \tag{I2}
        \label{eq: impedance wedge}
        y_f = 1 \implies &Z_{f,*} J = (a + \jim b)V_f, \\
        y_f = 0 \implies &J_f = 0, \nonumber \\
        u = 1 \implies &(a + b = 1) \wedge (a \geq 0, b > 0), \\
        u = 0 \implies &(a - b = 1) \wedge (a \geq 0, b \leq 0), \\
        \Im\{c_\mathrm{feed}^T J\} &= 0, \\
        R_\mathrm{min} \Re\{c_\mathrm{feed}^T J\} &\leq V_\mathrm{in} a \leq R_\mathrm{max} \Re\{c_\mathrm{feed}^T J\}, \tag{R} \\
        X_\mathrm{min} \Re\{c_\mathrm{feed}^T J\} &\leq V_\mathrm{in} b \leq X_\mathrm{max} \Re\{c_\mathrm{feed}^T J\} \tag{X}, 
    \end{align*}
    then any integer feasible solution $(x, I, z)$ of the resulting program satisfies
    \begin{align*}
        \tag{Z}
        R_\mathrm{min} \leq \Re\left\{\frac{V_\mathrm{in}z}{c_\mathrm{feed}^T I}\right\} \leq R_\mathrm{max} \quad \text{and} \quad
        X_\mathrm{min} \leq \Im\left\{\frac{V_\mathrm{in}z}{c_\mathrm{feed}^T I}\right\} \leq X_\mathrm{max}.
    \end{align*}
    Conversely, if an integer feasible solution $(x, I)$ of the MIP satisfies (Z), then there exist $a, b \in \complex$ and $J \in \complex^{B_\M}$ that satisfy the constraints (\ref{eq: impedance wedge}).
\end{theorem}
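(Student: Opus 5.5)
The argument follows the proof of \Cref{lem: impedance rectangular}, with one change. The normalization $\Re\{c_\mathrm{feed}^T J\} = 1$ is replaced by a normalization of the scaling factor $a + \jim b$: it is placed on the ``wedge'' $\{a+b=1, a\ge0, b>0\}\cup\{a-b=1, a\ge 0, b\le 0\}$, and the impedance bounds are scaled by the real number $r := \Re\{c_\mathrm{feed}^T J\}$.

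\emph{Forward direction.} Take an integer feasible $(x,I,J,a,b,u,z)$. In both branches of $u$, the point $(a,b)$ lies on a line not passing through the origin, so $a+\jim b\neq 0$. Exactly as in \Cref{lem: impedance rectangular}, for every $f$ with $y_f=1$ the indicator constraints give $Z_{f,*}I = V_f z$ and $Z_{f,*}J=(a+\jim b)V_f$, and $I_f=J_f=0$ otherwise. Since $Z_{\M'}$ is invertible for the selected submesh, $I = zJ/(a+\jim b)$. The constraint $\Im\{c_\mathrm{feed}^TJ\}=0$ gives $c_\mathrm{feed}^TJ = r\in\real$. Hence
\[
\frac{V_\mathrm{in} z}{c_\mathrm{feed}^T I} = \frac{(a+\jim b)V_\mathrm{in}}{r},
\]
provided $r\neq 0$.

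This requires showing $r>0$, which is the main obstacle. If $r=0$, then (R) forces $a=0$ and (X) forces $b=0$, contradicting $a+\jim b\neq0$. If $r<0$, then (R) reads $R_\mathrm{min} r \le V_\mathrm{in}a$, but $a\ge 0$ and $R_\mathrm{max} r<0$ give $V_\mathrm{in}a \le R_\mathrm{max} r <0\le V_\mathrm{in}a$, a contradiction. So $r>0$. Dividing (R) and (X) by $r$ then yields $R_\mathrm{min}\le V_\mathrm{in}a/r\le R_\mathrm{max}$ and $X_\mathrm{min}\le V_\mathrm{in}b/r\le X_\mathrm{max}$, which is exactly (Z).

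\emph{Converse.} Let $(x,I,z)$ satisfy (Z) and set $I_\mathrm{f}:=c_\mathrm{feed}^TI$, which is nonzero (and $z\neq0$) because (Z) is well defined. Let $w := V_\mathrm{in}z/I_\mathrm{f}$; by (Z), $\Re w\ge R_\mathrm{min}>0$. Consider the ray $\{t\,\bar z w/|z|^2 \cdot z : t>0\}$; more simply, the ray of complex numbers $s$ with the same argument as $w/(V_\mathrm{in})\cdot$(positive real). Because $\Re w>0$, this ray has positive real part, so it meets the wedge boundary exactly once. Take $a+\jim b = t w$ with the unique $t>0$ such that $a+|b|=1$. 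Set $u=1$ if $b>0$ and $u=0$ otherwise; then the two wedge implications hold. Define $J := (a+\jim b) I/z$. The first two constraints hold by linearity, since $Z_{f,*}J=(a+\jim b)V_f$ on selected rows and $J_f=0$ elsewhere. Moreover $c_\mathrm{feed}^TJ = t w I_\mathrm{f}/z = tV_\mathrm{in}$, which is real and positive, so $\Im\{c_\mathrm{feed}^TJ\}=0$ and $r=tV_\mathrm{in}$. Then $V_\mathrm{in}a = t V_\mathrm{in}\Re w = r\Re w$, so (R) is equivalent to $R_\mathrm{min}\le \Re w\le R_\mathrm{max}$, which holds. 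Likewise, (X) follows from the bounds on $\Im w$.

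The only delicate points are therefore the sign argument giving $r>0$ in the forward direction, and the choice of the positive scaling $t$ in the converse. The positivity of $R_\mathrm{min}$ is used in both.
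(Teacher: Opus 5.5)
Your proposal is correct and follows the paper's proof. In the forward direction you show $\Re\{c_\mathrm{feed}^T J\}>0$ and divide (R) and (X) by it; excluding $r=0$ by having (R) and (X) force $a=b=0$ is a slightly tidier version of the paper's three-case split on $a$ and $u$. Your converse choice $a+\jim b = t\,V_\mathrm{in}z/(c_\mathrm{feed}^T I)$ is the paper's construction, which takes $a-\jim b$ where the ray through $c_\mathrm{feed}^T I/z$ meets the wedge. The one point to add is that the cancellation $V_\mathrm{in}z/(c_\mathrm{feed}^T I)=(a+\jim b)V_\mathrm{in}/r$ needs $z\neq 0$ as well as $r\neq 0$; the paper asserts this explicitly, and it is implicit in (Z) being well defined.
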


\begin{proof}
    We start by noting that the indicator constraints above imply that $a + \jim b \neq 0$, and so we can immediately write $I = Jz / (a + \jim b)$.
    Note that, as in~\cite{hoeksma2026solving}, feasibility of the antenna optimization problem implies that $z \neq 0$.
    
    Next, observe that constraint (R) implies that $\Re\{c_\mathrm{feed}^T J\} \geq 0$ for any feasible solution, since $V_\mathrm{in} > 0$, $R_\mathrm{max} > 0$, and $a \geq 0$.
    Indeed, we can show that for any feasible solution it holds that $\Re\{c_\mathrm{feed}^T J\} > 0$.
    We distinguish three cases:
    \begin{description}
        \item[Case 1: $a > 0$.] Then $R_\mathrm{max} \Re\{c_\mathrm{feed}^T J\} \geq V_\mathrm{in}a > 0$.
        Since $R_\mathrm{max} > 0$ by assumption, it follows that $\Re\{c_\mathrm{feed}^T J\} > 0$.
        \item[Case 2: $a = 0$ and $u = 1$.] Then $b = 1$, which implies that 
        \[
            V_\mathrm{in} \leq X_\mathrm{max} \Re\{c_\mathrm{feed}^T J\},
        \]
        and since $V_\mathrm{in} > 0$ we have $\Re\{c_\mathrm{feed}^T J\} \neq 0$.
        \item[Case 3: $a = 0$ and $u = 0$.] Then $b = -1$, which implies that 
        \[
            X_\mathrm{min} \Re\{c_\mathrm{feed}^T J\} \leq -V_\mathrm{in},
        \]
        and since $V_\mathrm{in} > 0$ we have $\Re\{c_\mathrm{feed}^T J\} \neq 0$.
    \end{description}

    Now that we know $\Re\{c_\mathrm{feed}^T J\} \neq 0$, and therefore also $\Re\{c_\mathrm{feed}^T I\} \neq 0$, we can write
    \[
        \frac{V_\mathrm{in}z}{c^T_\mathrm{feed} I} = \frac{(a+\jim b)V_\mathrm{in}}{c^T_\mathrm{feed}J}.
    \]
    Since $V_\mathrm{in} \in \real$ and $\Im\{c_\mathrm{feed}^T J\} = 0$ in any feasible solution, we then have
    \begin{align}
    \label{eq: transform I to J}    
        \Re\left\{\frac{V_\mathrm{in}z}{c_\mathrm{feed}^T I}\right\} = \frac{a V_\mathrm{in}}{\Re\{c^T_\mathrm{feed} J\}}
        \quad \text{and} \quad
        \Im\left\{\frac{V_\mathrm{in}z}{c_\mathrm{feed}^T I}\right\} = \frac{b V_\mathrm{in}}{\Re\{c^T_\mathrm{feed} J\}}.
    \end{align}
    Since  $\Re\{c_\mathrm{feed}^T J \} > 0$, we can divide (R) and (X) through by this quantity and find
    \[
        R_\mathrm{min} \leq \frac{a V_\mathrm{in}}{\Re\{c_\mathrm{feed}^T J\}} \leq R_\mathrm{max} \quad \text{and} \quad
        X_\mathrm{min} \leq \frac{b V_\mathrm{in}}{\Re\{c_\mathrm{feed}^T J\}} \leq X_\mathrm{max},
    \]
    which is exactly the desired result after inserting \Cref{eq: transform I to J}.

    Now for the converse direction, assume that $(x, I, z)$ is feasible for Program \eqref{eq: base mip} and satisfies (Z).
    Let $I_f = c^T_\mathrm{feed} I$.
    Let $u \in \{0, 1\}$.
    Draw a line in the complex plane from $0$ to $I_f/z$.
    If $\Im\{I_f/z\} \geq 0$, then we let $a - \jim b$ be the intersection of this line with the line segment from $\jim$ to $1$, and we set $u = 0$.
    Otherwise, we let $a - \jim b$ be the intersection of this line with the line segment from $-\jim$ to $1$, and we set $u = 1$.
    By construction we now have $a - b = 1$ if $u = 0$, and $a + b = 1$ if $u = 1$.

    We define $J = (a+\jim b)I/z$ and let $J_\mathrm{feed} = c_\mathrm{feed}^T J$.
    Assume $u = 0$.
    Note that by construction, $a - \jim b = r \cdot I_\mathrm{feed}/z$ for some positive $r \in \real$.
    Thus, we have
    \[
        J_\mathrm{feed} = (a + \jim b)I_\mathrm{feed}/z = (a + \jim b) \cdot \frac{1}{r} \cdot (a - \jim b) \cdot z/z = \frac{a^2 + b^2}{r} \in \real_+.
    \]
    In other words, $c_\mathrm{feed}^T J$ is real and non-negative, and so $\Im\{c_\mathrm{feed}^T I\} = 0$ as required.

    To show that $a$ and $b$ satisfy (R) and (X) we compute:
    \[
        V_\mathrm{in}a = V_\mathrm{in}\Re\left\{
            \frac{c_\mathrm{feed}^T Jz}{c_\mathrm{feed}^T I}
        \right\}
            = \Re\{c_\mathrm{feed}^T J\} \cdot  \Re\left\{
                \frac{V_\mathrm{in}z}{c_\mathrm{feed}^T I}
            \right\}.
    \]
    We have already shown that $\Re\{c_\mathrm{feed}^T J\} \geq 0$, and so the fact that $(J, a, b)$ satisfies (R) and (X) follows from (Z).
    For the case $u = 1$ the analysis is identical.
\end{proof}

As \Cref{lem: impedance wedge} yields $n + 3$ extra variables and two indicator constraints, one might expect that a program with these constraints performs worse than a program applying \Cref{lem: impedance rectangular}; however, this is not the case in our experiments.
We therefore provide both formulations here, since it is likely solver-dependent which constraint formulation outperforms which.

\begin{remark}
    To the best of our knowledge this is the first time that constraints on the input impedance of an antenna have been formulated as a set of purely linear constraints.
    Previous attempts at antenna topology optimization typically directly implement \Cref{eq: impedance definition}, either as a penalty term for a genetic algorithm or as a hard nonlinear constraint~\cite{vanbeurden2003integro}.
    We expect that the latter approach also benefits from using \Cref{lem: impedance rectangular} or \Cref{lem: impedance wedge}.
\end{remark}

Finally, we remark that imposing $R_\mathrm{in} \geq R_\mathrm{min}$ (or even $|Z_{\mathrm{in}}| \leq 50 \Omega$ immediately yields an upper bound $P_\mathrm{max} = V^2_\mathrm{in}/R_\mathrm{min}$ (or $V^2_{\mathrm{in}}/|Z_{\mathrm{in}}|$) on the input power.
It is a standard result from antenna theory that, for a perfectly electrically conducting antenna, the input resistance is related to the radiated power as~\cite[Section 2.13]{balanisAntennaTheoryAnalysis2012}
\begin{align}
\label{eq: pmax}
    P_\mathrm{rad} = \frac{V_\mathrm{in}^2}{|Z_{\mathrm{in}}|}.
\end{align}
Thus, $R_\mathrm{in} \geq R_\mathrm{min}$ implies $P_\mathrm{rad} \leq V_\mathrm{in}^2/R_\mathrm{min} = P_\mathrm{max}$.
Imposing impedance constraints is thus not only a practical necessity, but also yields valid bounds for \Cref{lem: energy conservation}, \Cref{lem: variable bounds} and \Cref{cor: dual bound}.

\subsection{Polarization}
\label{sec: polarization}

\begin{figure}
    \centering
    \begin{subfigure}{0.5\textwidth}
    \centering
        \begin{tikzpicture}[x={(-10:1cm)},y={(90:1cm)},z={(210:1cm)},scale=0.6]
        \draw (-1,0,0) node[above] {$x$} -- (6,0,0);
        \draw (0,0,0) -- (0,2,0) node[above] {$y$};
        \draw (0,0,0) -- (0,0,2) node[left] {$z$};
        \draw[thick] plot[domain=0:6,samples=200] (\x,{cos(deg(pi*\x))},0);
        \foreach \x in {0.1,0.3,...,6} {
          \draw[->,help lines] (\x,0,0) -- (\x,{cos(deg(pi*\x))},0);
        }
        \node[above right] at (0,1,0) {$E$};
    \end{tikzpicture}
    \caption{Linear polarization.}
    \end{subfigure}\hfill
    \begin{subfigure}{0.5\textwidth}
    \centering
        \begin{tikzpicture}[x={(-10:1cm)},y={(90:1cm)},z={(210:1cm)},scale=0.6]
        \draw (-1,0,0) node[above] {$x$} -- (6,0,0);
        \draw (0,0,0) -- (0,2,0) node[above] {$y$};
        \draw (0,0,0) -- (0,0,2) node[left] {$z$};
        \draw[thick] plot[domain=0:6,samples=200] (\x,{cos(deg(pi*\x))},{sin(deg(pi*\x))});
        \foreach \x in {0.1,0.15,...,6} {
          \draw[->,help lines] (\x,0,0) -- (\x,{cos(deg(pi*\x))},{sin(deg(pi*\x))});
        }
        \node[above right] at (0,1,0) {$E$};
    \end{tikzpicture}
    \caption{Circular polarization.}
    \end{subfigure}
    \caption{
        The figure traced out by the electric field vector for traveling electromagnetic waves of linear and circular polarization. 
        The waves depicted travel in the positive $x$-direction.
        \label{fig: polarization}
    }
\end{figure}

Since electromagnetic radiation is a \emph{transversal} wave, one can speak of the \emph{polarization} of the radiation emitted by an antenna.
Simply put, the polarization of a wave is determined by the figure traced out by its amplitude vector as the wave propagates through space when facing the propagation direction.
If this figure is a line segment, then the wave is \emph{linearly polarized}; if the figure is a circle, then one speaks of \emph{circular polarization}.
See \Cref{fig: polarization} for a schematic depiction.

The polarization characteristics of an antenna are important for various practical purposes.
For instance, a strongly linearly polarized antenna can be used to reduce interference from spurious sources, by matching the orientation of a transmitting antenna to its intended receiver.
Conversely, one may wish to design an antenna that is less sensitive to its spatial orientation; in this case, a circularly polarized antenna is appropriate~\cite{balanisAntennaTheoryAnalysis2012}.

To quantify the degree to which an antenna with associated far-field pattern $\EFF$ is polarized, we define its \emph{polarization purity}.

\begin{definition}[Polarization purity]
\label{def: polarization}
    Fix a unit vector $\hat{r}_{\vartheta, \varphi} \in \mathbb{S}^2$ in the $(\vartheta, \varphi)$-direction and two mutually orthogonal unit vectors $e_x$ and $e_y$ with $e_x \cdot \hat{r}_{\vartheta,\varphi} = e_y \cdot \hat{r}_{\vartheta,\varphi} = 0$.
    We say that an antenna has $\epsilon$-pure linear polarization in the $(\vartheta,\varphi)$-direction if the far-field $E_\mathrm{FF}$ radiated by the antenna satisfies
    \[
        |e_y \cdot \EFF(\vartheta, \varphi)| \leq \epsilon \cdot |e_x \cdot \EFF(\vartheta, \varphi)|.
    \]
    Let $e_{\vartheta}$ denote the unit vector in the $\vartheta$-direction and $e_{\varphi}$ the unit vector in the $\varphi$-direction.
    We say that an antenna has $\epsilon$-pure circular polarization in the $(\vartheta, \varphi)$-direction if
    \begin{align*}
        |e_\vartheta \cdot E_\mathrm{FF}(\vartheta, \varphi) - \jim e_\varphi \cdot E_\mathrm{FF}(\vartheta, \varphi)| &\leq \epsilon \cdot |e_\vartheta \cdot E_\mathrm{FF}(\vartheta, \varphi)|.
    \end{align*}
\end{definition}

Thus far, we have only concerned ourselves with maximizing some fixed vector component of the electric field radiated by an antenna, without regard to the components of the field in other directions.
For many applications however, some control on the polarization of the antenna is required~\cite{balanisAntennaTheoryAnalysis2012}.
\Cref{def: polarization} suggests a simple implementation in terms of linear constraints, when the direction $(\vartheta, \varphi)$ of \Cref{def: polarization} is identical to the direction in which we maximize the field.

\begin{lemma}\label{lem: polarization}
    Fix $(\vartheta, \varphi)$ and two mutually orthogonal unit vectors $e_x$ and $e_y$ with $e_x \cdot \hat{r}_{\vartheta,\varphi} = e_y \cdot \hat{r}_{\vartheta,\varphi} = 0$.
    Assume that $e_x = e_\vartheta$.
    Let the coefficients $c_\M$ satisfy for a feasible solution $(\M, I)$ of the program of \Cref{thm: base mip}
    \[
        c_\M^T I = e_x \cdot \EFF(\vartheta, \varphi).
    \]
    If the antenna optimization program of \Cref{thm: base mip} with objective $\Re\{c_\M^T I\}$ is augmented with the constraints 
    \begin{align*}
        |e_y \cdot \EFF(\vartheta, \varphi)| \leq \epsilon \cdot e_x \cdot \EFF(\vartheta, \varphi)
    \end{align*}
    then the optimal solution $\M^*$ of the resulting program corresponds to an $\epsilon$-pure linearly polarized antenna.
    Similarly, if the program is augmented with
    \begin{align*}
        |e_\vartheta \cdot E_\mathrm{FF}(\vartheta, \varphi) - \jim e_\varphi \cdot E_\mathrm{FF}(\vartheta, \varphi)| &\leq \epsilon \cdot e_\vartheta \cdot E_\mathrm{FF}(\vartheta, \varphi)
    \end{align*}
    then the optimal solution $\M^*$ of the resulting program corresponds to an $\epsilon$-pure circularly polarized antenna.
\end{lemma}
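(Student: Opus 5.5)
The plan is to show that the added constraints, which are second-order cone constraints in the variables $(I,z)$, cut out exactly the $\epsilon$-pure feasible solutions. The key fact is \Cref{cor: objective real}: at an optimum, $c_\M^T I^*$ is real and non-negative.

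First I make the constraints expressible in the MIP variables. By \Cref{section: electromagnetism}, each far-field component is a linear form in the currents. So $e_y\cdot\EFF(\vartheta,\varphi)=c^y_\M(\vartheta,\varphi)^T I$ and $e_\varphi\cdot\EFF(\vartheta,\varphi)=c^\varphi_\M(\vartheta,\varphi)^T I$, while $e_x\cdot\EFF=e_\vartheta\cdot\EFF=c_\M^T I$ by hypothesis. The right-hand sides $\epsilon\, e_x\cdot\EFF$ are complex-valued, so I read them as $\epsilon\,\Re\{c_\M^T I\}$. Each added constraint then has the form $|a^T I|\le \epsilon\Re\{c_\M^T I\}$, which is a single convex second-order cone constraint. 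The program stays an MISOCP.

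Next I transfer back to the physical antenna through the scaled solution. Take an optimal integer solution $(x^*,I^*,z^*)$. As in the proof of \Cref{lem: energy conservation}, it corresponds via $\phi_{z^*}^{-1}$ to $(\M^*,I)$ with $I^*=z^*\tilde I$ on $B_{\M^*}$, where $\tilde I$ solves $Z_{\M^*}\tilde I=V_{\M^*}$. Here $z^*\neq 0$: otherwise the objective is zero, and any nontrivial feasible point is better. All far-field components of $I^*$ are those of the physical current multiplied by $z^*$. Both sides of either polarization inequality are homogeneous of degree one in $|z^*|$. So once the right-hand side is written as a modulus, the inequality for $I^*$ is equivalent to the one for the physical antenna $\M^*$.

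The remaining step is to pass from $\epsilon\Re\{c_\M^TI^*\}$ to $\epsilon|c_\M^TI^*|$. This is immediate in one direction: $\Re\{w\}\le|w|$, so $|c^y_\M{}^T I^*|\le\epsilon\Re\{c_\M^TI^*\}\le\epsilon|c_\M^TI^*|$. This is exactly the condition of \Cref{def: polarization} after dividing by $|z^*|$. For circular polarization the same holds with $e_\vartheta\cdot\EFF-\jim e_\varphi\cdot\EFF$ on the left.

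The hard part is showing that the augmentation does not change which geometry is optimal among the $\epsilon$-pure ones, i.e. that the optimal $\M^*$ maximizes $|c^T_{\M'}I|$ over pure submeshes. I will redo the phase-rotation argument of \Cref{thm: base mip} and \Cref{cor: objective real}. Given any $\epsilon$-pure feasible $(\M',I)$, I choose $z$ with $|z|=1$ so that $z\,c_{\M'}^TI$ is real and non-negative. The left-hand sides are invariant under this rotation in modulus. The right-hand side becomes $\epsilon|c^T_{\M'}I|$, so the rotated point satisfies the added cone constraints and has objective $|c^T_{\M'}I|$. Hence the augmented program attains the optimum over pure antennas, and its optimizer is itself pure by the previous paragraph.
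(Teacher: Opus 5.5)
Your argument is correct, but it takes a different route from the paper. The paper's proof is essentially one line. By \Cref{cor: objective real}, $c_\M^T I^*$ is real and non-negative at an optimum, so $e_x\cdot\EFF = |e_x\cdot\EFF|$ there and the added constraint coincides with \Cref{def: polarization}.

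You instead read the right-hand side as $\epsilon\Re\{c_\M^T I\}$, which is exactly what the paper enforces in \Cref{section: patch}, and use $\Re\{w\}\le|w|$. This buys several things:
\begin{itemize}
\item Purity follows for \emph{every} feasible point with $z\neq 0$, not only for optimal ones.
\item You never need \Cref{cor: objective real} to remain valid after the augmentation. The paper uses it for the augmented program without checking this. (It does remain valid, by precisely the phase rotation in your last paragraph.)
\item You handle the rescaling $I^* = z^*\tilde I$ between MIP currents and physical currents explicitly. The paper glosses over this by identifying $I^*$ with the antenna's current.
\item Your final paragraph proves something the lemma does not state: the augmented program's optimum equals the best $|c_{\M'}^T I|$ over $\epsilon$-pure submeshes. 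This exactness is what actually justifies using the constraint, so it is a worthwhile addition.
\end{itemize}

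One small point concerns $z^*\neq 0$. Your justification (``any nontrivial feasible point is better'') needs a feasible point with strictly positive objective. By your own exactness argument, such a point exists if and only if some $\epsilon$-pure submesh has $e_x\cdot\EFF\neq 0$. If no such submesh exists, then $(x,0,0)$ is optimal for every $x$, and the conclusion fails for such an $\M^*$. The paper's statement ignores this degenerate case too. You can fix it by stating the nondegeneracy assumption explicitly and proving the exactness part before invoking $z^*\neq 0$.
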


\begin{proof}
    By \Cref{cor: objective real}, we have $\Im \{ c^T_\M I^*\} = 0$ and $c^T_\M I^* \geq 0$ for any optimal solution $(\M^*, I^*)$.
    Thus, $e_x \cdot \EFF(\vartheta, \varphi) \geq 0$ for $\EFF$ computed from any optimal solution.
    One can therefore drop the absolute value operators from the right-hand side of the inequalities in \Cref{def: polarization} for any optimal solution.
    The result is an antenna whose current density $I^*$ produces a field $\EFF$ that satisfies $\epsilon$-pure linear polarization.
    The same argument holds for circular polarization, since $e_x = e_\vartheta$ by assumption.
\end{proof}

\section{Computational Results}
\label{section: computation}

We will showcase our improvements by optimizing a patch antenna.
For this test case, the base mesh $\M$ was generated using the open-source program Gmsh~\cite{geuzaine2009gmsh}.
The electromagnetics simulations, necessary to obtain the matrices $Z_M$, were performed using the open-source boundary element method library Bempp-cl~\cite{betcke2021bempp}.
All optimization was performed using Gurobi 11.0.2~\cite{gurobi} on a  system with  32 Intel Xeon Gold 5217 cores and 16 GB of memory.

\subsection{Patch Antenna}
\label{section: patch}

Patch antennas are perhaps the most important type of antenna for many modern use cases, with applications in radar systems, mobile communications, medical imaging, and many other fields~\cite{balanisAntennaTheoryAnalysis2012}.
They consist of a flat strip of metal suspended above a much larger metal plate, the \emph{ground plane}.
The electromagnetic interactions between the ground plane and the antenna direct the radiated fields away from the antenna.

Patch antennas admit a great deal of flexibility in their design.
As such, they come in various basic shapes---rectangles, circles, semi-circles, and polygons have all been proposed for various purposes.
On top of this, the topology of patch antennas can be altered to achieve various goals, including a stronger radiation response and good polarization characteristics~\cite{balanisAntennaTheoryAnalysis2012}.
This design freedom makes the patch antenna a perfect testing ground for our MIP.

\paragraph*{Mesh Description.}~
The mesh $\M$ we use is displayed in a top-down view in \Cref{fig: patch}.
The main body of the mesh is a square of side-length $L = 0.45\lambda$, where $\lambda \approx 10\, \mathrm{cm}$ is the wavelength of electromagnetic radiation at a frequency $f = 3\, \mathrm{GHz}$.
Protruding from the side of the mesh is the \emph{feed line}, which is a narrow rectangular strip of width $L/10$.
The feed line extends a distance $L/4$ towards the negative-$x$ direction and subsequently extends down towards the ground plane at $z = 0$.
The vertical distance from the patch to the (infinite) ground plane is $0.0425\lambda$.
The entire mesh, including the feed line, contains $91$ triangles.

\begin{figure}
    \centering
    \begin{tikzpicture}
    [
      scale=0.7,
      x={(1cm,0cm)},
      y={(0cm,1cm)},
      z={(0cm,0cm)}
    ]
        \pgfmathsetmacro{\lam}{10} 
        \pgfmathsetmacro{\W}{\lam*0.45}
        \pgfmathsetmacro{\offset}{0.6}
        \pgfmathsetmacro{\WP}{\W/10}
        \pgfmathsetmacro{\LP}{\W/4}

        \draw[<->] (\W/2+\offset, \W/2) -- (\W/2 + \offset, -\W/2) node[above, sloped, midway] {$0.45\lambda$}; 
        \draw[<->] (-\W/2, \W/2+\offset) -- (\W/2, \W/2 + \offset) node[above, sloped, midway] {$0.45\lambda$}; 

        \draw[->] (-\W/2-\LP-\offset, -\WP/2-\offset) -- (-\W/2-\LP-\offset, -\WP/2); 
        \draw[->] (-\W/2-\LP-\offset, \WP/2+\offset) -- (-\W/2-\LP-\offset, \WP/2);
        \node[rotate=90] () at (-\W/2-\LP-2*\offset, 0) {$0.045\lambda$};

        \draw[dotted] (-\W/2-\LP, \WP/2+\offset) -- (-\W/2-\LP, \W/2+\offset);
        \draw[dotted] (-\W/2, \W/2) -- (-\W/2, \W/2+\offset);
        \draw[<->] (-\W/2-\LP, \W/2+\offset) -- (-\W/2, \W/2+\offset) node[above, sloped, midway, yshift=-\offset] {$0.1125\lambda$};

        \draw[->] (-\W*1.25, -\W*0.5) -- ++ (2, 0) node[below] {$x$};
        \draw[->] (-\W*1.25, -\W*0.5) -- ++ (0, 2) node[left] {$y$};

        \tikzset{stlfacet/.style={fill=pec, draw=black, line join=round}}
        \input{square_91.tikz}

        \tikzset{shift={(\lam*0.75, 0)}}
        \input{patch_opt_91.tikz}
    \end{tikzpicture}\hspace{1em}
    \caption{\label{fig: patch}Comparison of meshes: the initial patch antenna mesh (left) and the optimized patch antenna mesh (right).}
\end{figure}

\paragraph*{Ground plane.}~
To model the ground plane, two options are available.
One is to include a ground plane in the mesh, which takes all the electromagnetic interactions between the antenna and the ground plane into account.
While this the more accurate option, it is computationally more expensive, since these interactions enter the MIP as additional constraints.
The other option is to approximate the ground plane as infinite in extent,
which introduces a boundary condition to Maxwell's equations at the ground plane.
This boundary condition in particular can be straightforwardly encoded into the matrix $Z_\M$~\cite{balanisAntennaTheoryAnalysis2012,griffithsIntroductionElectrodynamics2023}, and so this is the route we take here.

\paragraph*{Feed Model.}~
Accurate feed models in electromagnetic modelling are an active area of research.
Since this is not our focus here, we apply a simple delta-gap feed~\cite{balanisAntennaTheoryAnalysis2012,davidsonComputationalElectromagneticsRF2010}, applying a voltage difference across the antenna and its image with respect to the ground plane.
This requires us to model a feed strip which extends down from the antenna to the ground plane; see \Cref{fig: patch}.
A similar feed model can be found in the antenna simulation literature~\cite{stupfel2023efficient}.
The horizontal part of the feed-line has a width of $\lambda/20$, and extends $\lambda/8$ towards the negative-$x$ direction.

\paragraph{Objective function.}~
We maximize the $x$-directed component of the electric field that is radiated in the positive-$z$ direction (cf.\ \Cref{fig: patch}).
That is, according to the theory in \Cref{section: electromagnetism}, the objective function we maximize is
\[
    |e_x \cdot \EFF(0, 0)| = |c_\M(0, 0)^T I|.
\]

\paragraph*{Electromagnetic constraints.}~
As discussed in \Cref{sec: impedance}, it is necessary for an efficient antenna to ensure an input impedance near the reference impedance $Z_0 = 50\, \Omega$.
We found in our previous work~\cite{hoeksma2026solving} that it is often sufficient to set a lower bound on the input impedance.
Thus, we use \Cref{lem: impedance rectangular} to set
\[
    R_\mathrm{in} := \Re\{Z_{\mathrm{feed}}\} \geq 50 \, \Omega.
\]

The current density on the surface of a patch antenna has a significant component in two directions.
This means that the polarization of the radiated field must be controlled (see \Cref{sec: polarization}).
We optimize here a $0.01$-pure linearly polarized patch antenna, which means we enforce
\[
    |c_{\M, y}^T I| \leq 0.01 \cdot \Re\{c_{\M}^T I\}.
\]

\paragraph{Geometric constraints.}~
To ensure electrical connectivity and exclude electromagnetically ambiguous geometries, we include the flow-based connectivity constraints and point connection exclusion of \Cref{sec: geometric constraints}.
We also constrain the antenna to be symmetric about the $x$-axis.
(Note that the base mesh in \Cref{fig: patch} respects this symmetry.)

Finally, we add the constraint that every vertex of $G_{\M'}$ has minimum degree $2$.
This ensures that every triangle of the mesh has at least two neighbors.
In particular, this constraint excludes sharp edges which are only resolved by a single triangle.
These edges should be meshed by at least two mesh elements, since local variations in current densities can be very large (even singular) in such structures~\cite{bladel1992singular}.

\paragraph*{Current bounds.}~
We apply the current bounds described in \Cref{sec: heatingbounds}; the individual bounds range from 168 to 654 A/m.

\begin{figure}
    \centering
    \includegraphics[width=0.8\linewidth]{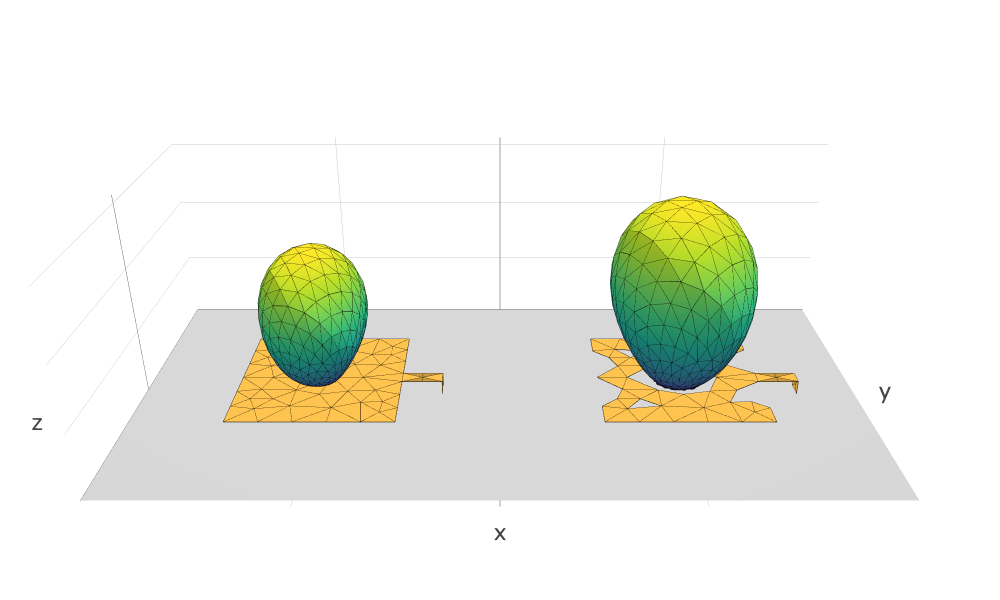}
    \caption{
        Initial and optimized radiation pattern of the patch antenna.
        The radial distance of a point on the surface from the origin corresponds the field strength in that direction.
    }
    \label{fig: patch results}
\end{figure}

\paragraph*{Results.}~
\Cref{fig: patch} (right) shows the antenna design resulting from the application of our MIP to the initial mesh $\M$ with the constraints described above.
\Cref{fig: patch results} shows the radiation pattern of the initial and optimized antennas.
The initial mesh has a far-field amplitude in the positive-$z$ direction of 1.828 V, while the optimal value is 2.421 V.
We discuss the running time in \Cref{sec: physicscuts effectiveness}.

The optimal antenna has an input impedance at $3 \, \mathrm{GHz}$ of
\[
        Z_\mathrm{in} = 50 - 3.9\jim\, \Omega.
\]
To quantify the efficiency of an antenna, a standard figure of merit in the literature is the \emph{reflection coefficient} $\Gamma$~\cite{balanisAntennaTheoryAnalysis2012}, defined as
\
\[
    \Gamma = \frac{Z - Z_0}{Z + Z_0}.
\]
One can show that the reflected power due to an impedance mismatch, as a fraction of the input power, is equal to $|\Gamma|^2$.
Filling in $Z_\mathrm{in}$ and $Z_0$ above, we have
\[
    |\Gamma| \approx 3.87 \cdot 10^{-2},
\]
implying that under ideal circumstances the optimized antenna of \Cref{fig: patch} (right) accepts $99.98\%$ of its supplied power.

It is important to note that the input impedance $Z_\mathrm{in}$ of an antenna is strongly dependent on the frequency of the input signal.
While the above demonstrates that the antenna is very well-matched at $3 \, \mathrm{GHz}$, many applications require an antenna to function over a wider range of frequencies---its \emph{bandwidth}.
The impedance bandwidth of an antenna is commonly defined~\cite{balanisAntennaTheoryAnalysis2012,davidsonComputationalElectromagneticsRF2010} as the range of frequencies over which
\[
    S_{11}(f) := 20\log_{10}|\Gamma(f)| \leq -10\, \mathrm{dB}.
\]

In \Cref{fig: reflection}, we plot $Z_\mathrm{in}(f)$ and $S_{11}(f)$ for $2.8 \, \mathrm{GHz} \leq f \leq 3.2\, \mathrm{GHz}$.
We observe that $S_{11}$ lies far below the desired threshold of $-10 \, \mathrm{dB}$ for $f = 3 \, \mathrm{GHz}$, but only stays below this threshold for a rather narrow bandwidth of approximately $78.9 \, \mathrm{MHz}$.
This is characteristic of patch antennas~\cite{balanisAntennaTheoryAnalysis2012}, so it is expected that we would achieve a very narrow-band antenna without further constraints to widen the bandwidth.

\begin{figure}
    \centering
    \includegraphics[width=\linewidth]{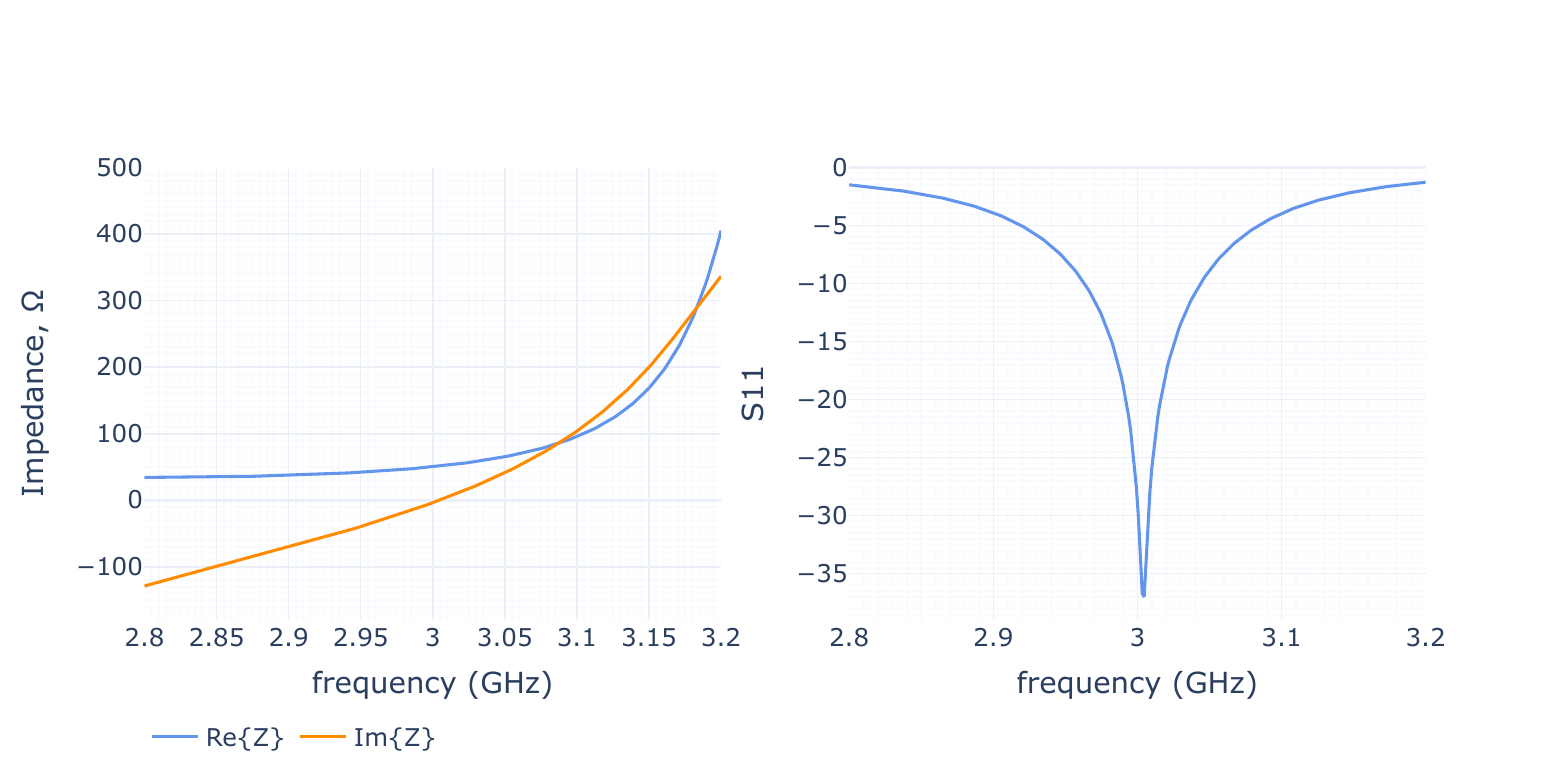}
    \caption{Input impedance and reflection coefficient of the optimized antenna for $2.8 \, \mathrm{GHz} \leq f \leq 3.2\, \mathrm{GHz}$.}
    \label{fig: reflection}
\end{figure}

\subsection{Effectiveness of Physics-Informed Cuts}
\label{sec: physicscuts effectiveness}

The physics-informed cuts and bounds of \Cref{section: physicscuts} are crucial to take the MIP from largely intractable to solvable on an ordinary computer.
In particular, the current bounds from \Cref{sec: heatingbounds} allows the solver to derive valid dual bounds, and the energy conservation constraint (\Cref{lem: energy conservation}), decomposed formulation (\Cref{thm: decomposed formulation}) and associated objective bound (\Cref{cor: dual bound}) help to lower the dual bound significantly, so that the MIP gap is reduced. 
All of these use the fact that the input power $P_\mathrm{max}$ is bounded from above by $ V_\mathrm{in}^2/R_\mathrm{min}$. 
This physical insight therefore greatly contributes to the practical feasibility of our approach.

\Cref{tab: cuts comparison} quantifies the obtained dual bound for patch antenna instances of several sizes.
Note here that the underlying patch antenna geometry is the same for all instances; only the number of triangles used to discretize the geometry changes.
Obviously, more triangles allow for more design flexibility, reflected in the increasing primal optimal value in row 7 of \Cref{tab: cuts comparison}.
On the other hand, the required computation time also increases strongly with the number of triangles.

\begin{table}[]
    \centering
    \begin{tabular}{l|l|c|c|c|c|c}
         Bound or cut                   &  Result in paper                     & $n=45$ & $n=53$ & $n=77$ & $n=85$ & $n=91$    \\ \hline
         Joule heating bounds           & \Cref{sec: heatingbounds}            & 23.16  & 179.2  & 297.0  & 330.5  & 342.0 \\
         Radiating current bounds       & \Cref{lem: variable bounds}          & 4.552  & 7.734  & 7.361  & 7.191  & 8.008  \\
         Objective cut                  & \Cref{cor: dual bound}               & 6.284  & 6.968  & 8.503  & 8.531  & 8.619 \\
         Energy constraint (full rank)  & \Cref{lem: energy conservation}      & 3.123  & 4.689  & 4.103  & 4.111  & 4.116 \\ 
         Energy constraint, $r\leq 13$  & \Cref{thm: decomposed formulation}   & 3.183  & 3.861  & 4.236  & 4.209  & 4.293 \\ \hline
         Primal optimum                 & -                                    & 1.671  & 2.076  & 2.366  & 2.403  & 2.421 \\\hline
         
         \makecell{
         Solve time \\
         (only Joule heating bounds)}
         
                                        & -                                    & 9.29 s & 10.83 s & 477 s  & 860 s & 1612 s \\ \hline
         \makecell{
         Solve time \\
         (all bounds and cuts, $r\leq 13$)}             & -                        & 5.63 s & 14.60 s   & 520 s & 1341 s & 2158 s \\  \hline
    \end{tabular}
    \caption{
        Dual bound values in the root node relaxation of each of the physics-informed bounds and cuts derived in \Cref{section: physicscuts}, for initial meshes with different numbers of triangles $n$.
        Each row of the first segment of the table applies the bounds of \Cref{sec: heatingbounds} together with \emph{only} its corresponding bound or cut.
        For the energy conservation constraint of \Cref{lem: energy conservation}, both the dual bound resulting from the full-rank matrix $R$ and its lower-rank projection on the at most $13$ most significant dimensions of its row space are given.
    }
    \label{tab: cuts comparison}
\end{table}

Including only the bounds from \Cref{sec: heatingbounds} yields a dual bound that is effectively not informative at all.
Including also the above-mentioned constraints reduces the bound to a value comparable to the primal optimal value; the energy conservation constraint is especially effective. 
We reiterate that, according to \Cref{lem: energy conservation}, \Cref{lem: variable bounds} and \Cref{cor: dual bound}, the physics-informed cuts are globally valid.
The results in \Cref{tab: cuts comparison} are thus not the effect of some cleverly-chosen approximation, but indicate that physics-informed cuts yield a much stronger formulation than naively applying \Cref{thm: base mip}.

While the energy conservation constraint is evidently very effective in reducing the dual bound, it comes at a cost.
Since the matrix $R$ from \Cref{lem: energy conservation} is generally dense and ill-conditioned, the quadratic constraint is rather expensive computationally.
One approach to reduce the computational burden is to approximate the full matrix $R$ by a low-rank surrogate, by keeping only the dimensions that correspond to the largest singular values of $R$.
This reduces the strength of the energy conservation constraint, but retains its validity.
In practice, one can drop to a rank of 10-20 while losing very little strength; see rows 5 and 6 of \Cref{tab: cuts comparison}.

\Cref{fig: dual bound} shows the progression of the dual bound for the patch antenna of \Cref{section: patch}, comparing the full-rank and low-rank energy conservation constraints.
Due to the high computational cost of the full-rank constraint, the solver can essentially stall, while with the low-rank approximation the solver is able to make slow but steady progress.
On the other hand, not including the energy conservation constraint at all still leads to a faster time to prove optimality.
If we examine the number of nodes solved over time, we see that this is due to the computational cost of the energy conservation constraints, which solve far fewer nodes per unit time.

\begin{figure}
    \centering
    \includegraphics[width=\linewidth]{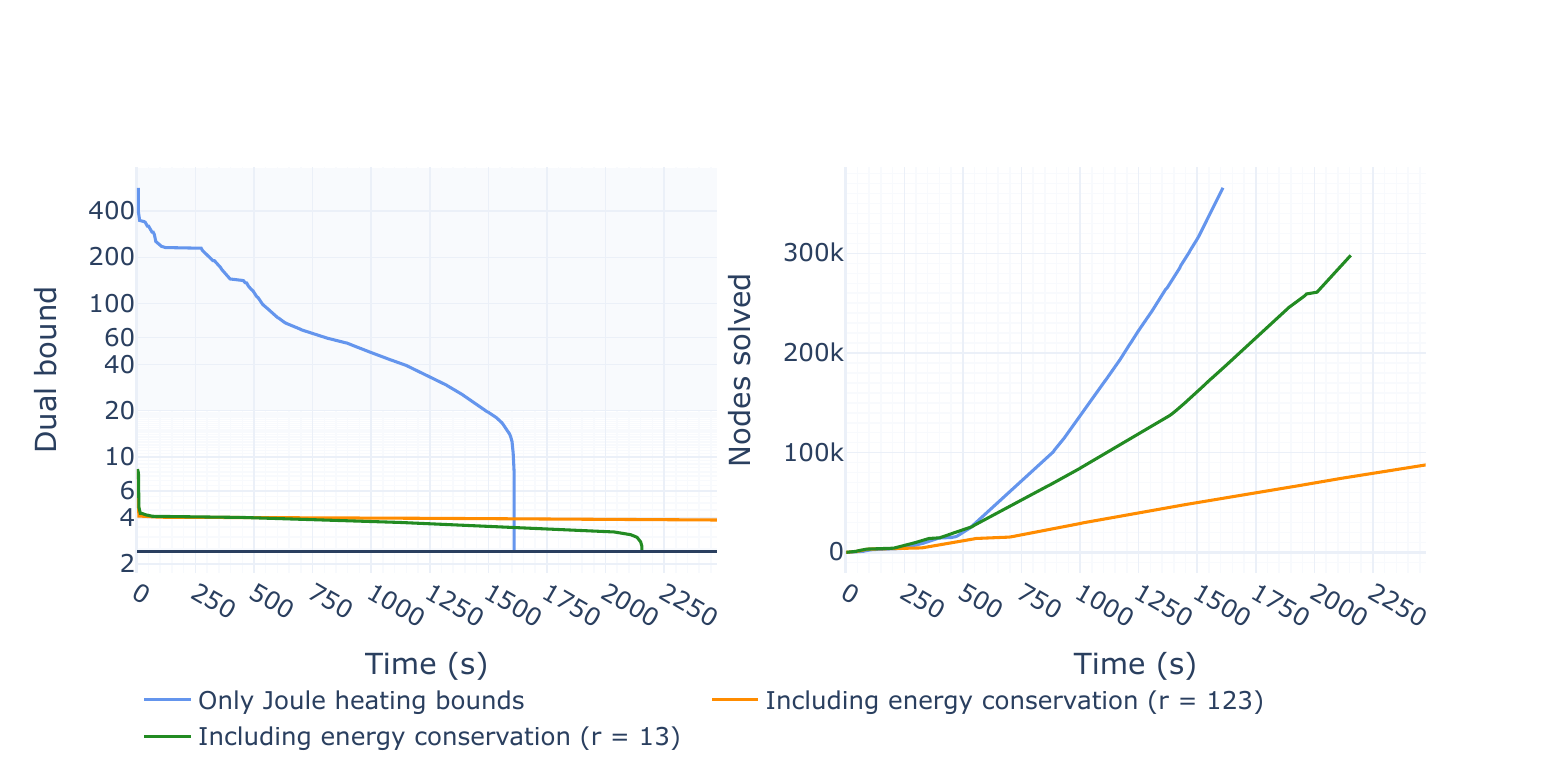}
    \caption{
        Dual bound and number of solved branch-and-bound tree nodes for the patch antenna instance of \Cref{section: patch} over time.
        The horizontal black line marks the primal optimal value.
        Note that the figure to the left has a logarithmic scale on its vertical axis.
    }
    \label{fig: dual bound}
\end{figure}

\section{Discussion}
\label{section: discussion}

One motivation for the present work is to exhibit a new set of challenging optimization problems for the mathematical programming community.
Together with the predecessor paper~\cite{hoeksma2026solving}, we have placed a difficult and practically important engineering design problem within the realm of applicability of standard mathematical programming tools.

In particular, our application of physics-informed cuts based on the conservation of energy has proved essential to drastically strengthen the basic Program \eqref{eq: base mip}.
In future applications, we expect that deriving more physics-informed constraints will help to improve the scalability of the program.

Our test case in \Cref{section: computation} shows the effectiveness of this approach.
However, from the engineering perspective, this test case is still a rather simple patch antenna design problem.
Therefore, in a third companion paper~\cite{vanrhijn2026optimal}, we evaluate the proposed methods on more realistic test cases, including the circular polarization constraints of \Cref{sec: polarization} and an extension of \Cref{thm: base mip} to multi-frequency design. That paper
targets the antenna engineering  rather than the mathematical programming community.

\begin{acknowledgements}
We are most grateful to Dave Bekers, Raymond van Dijk, Stefania Monni and Yannis Iliopoulos at the Netherlands Organisation for Applied Scientific Research, who provided us with much help and feedback over the course of this project. 

The authors received funding from the Dutch National Growth Fund programme POLARIS (\url{www.polaris-ngf.nl}).
\end{acknowledgements}

\section*{Conflict of interest and AI}

The authors declare that they have no conflict of interest.  
This paper and the results therein were obtained without substantial usage of AI tools. 
AI was exclusively used for some routine calculations, to help with literature search, and to help with programming.

\printbibliography

\end{document}

%% file: square_91.tikz
\path[stlfacet] (-2.248443,-2.248443,0.424706) -- (-1.349066,-2.248443,0.424706) -- (-1.78417,-1.780315,0.424706) -- cycle;
\path[stlfacet] (-2.248443,-1.57391,0.424706) -- (-2.248443,-2.248443,0.424706) -- (-1.78417,-1.780315,0.424706) -- cycle;
\path[stlfacet] (-1.814703,-1.271665,0.424706) -- (-2.248443,-1.57391,0.424706) -- (-1.78417,-1.780315,0.424706) -- cycle;
\path[stlfacet] (-1.349066,-2.248443,0.424706) -- (-1.231027,-1.297387,0.424706) -- (-1.78417,-1.780315,0.424706) -- cycle;
\path[stlfacet] (-2.248443,-0.899377,0.424706) -- (-2.248443,-1.57391,0.424706) -- (-1.814703,-1.271665,0.424706) -- cycle;
\path[stlfacet] (-1.349066,-2.248443,0.424706) -- (-0.449689,-2.248443,0.424706) -- (-0.472922,-1.568761,0.424706) -- cycle;
\path[stlfacet] (-1.231027,-1.297387,0.424706) -- (-1.814703,-1.271665,0.424706) -- (-1.78417,-1.780315,0.424706) -- cycle;
\path[stlfacet] (-1.231027,-1.297387,0.424706) -- (-1.349066,-2.248443,0.424706) -- (-0.472922,-1.568761,0.424706) -- cycle;
\path[stlfacet] (-3.372665,0,0.212353) -- (-3.372665,-0.103255,0.309871) -- (-3.372665,-0.224844,0.267315) -- (-3.372665,-0.224844,0) -- cycle;
\path[stlfacet] (-3.372665,0.224844,0) -- (-3.372665,0,0.212353) -- (-3.372665,-0.224844,0) -- cycle;
\path[stlfacet] (-0.449689,-2.248443,0.424706) -- (0.449689,-2.248443,0.424706) -- (-0.472922,-1.568761,0.424706) -- cycle;
\path[stlfacet] (-1.546849,-0.676474,0.424706) -- (-2.248443,-0.899377,0.424706) -- (-1.814703,-1.271665,0.424706) -- cycle;
\path[stlfacet] (-3.372665,0.224844,0.424706) -- (-3.372665,-0.103255,0.309871) -- (-3.372665,0,0.212353) -- cycle;
\path[stlfacet] (-1.231027,-1.297387,0.424706) -- (-1.546849,-0.676474,0.424706) -- (-1.814703,-1.271665,0.424706) -- cycle;
\path[stlfacet] (-2.810554,-0.224844,0.424706) -- (-2.810554,0,0.424706) -- (-3.372665,-0.224844,0.424706) -- cycle;
\path[stlfacet] (-3.372665,0.224844,0.424706) -- (-3.372665,0,0.212353) -- (-3.372665,0.224844,0) -- cycle;
\path[stlfacet] (-3.372665,0.224844,0.424706) -- (-3.372665,-0.224844,0.424706) -- (-2.810554,0,0.424706) -- cycle;
\path[stlfacet] (-2.248443,-0.224844,0.424706) -- (-2.248443,-0.899377,0.424706) -- (-1.546849,-0.676474,0.424706) -- cycle;
\path[stlfacet] (-2.810554,0,0.424706) -- (-2.810554,-0.224844,0.424706) -- (-2.248443,-0.224844,0.424706) -- cycle;
\path[stlfacet] (-0.79667,-0.735233,0.424706) -- (-1.231027,-1.297387,0.424706) -- (-0.472922,-1.568761,0.424706) -- cycle;
\path[stlfacet] (-2.810554,0.224844,0.424706) -- (-3.372665,0.224844,0.424706) -- (-2.810554,0,0.424706) -- cycle;
\path[stlfacet] (0.449689,-2.248443,0.424706) -- (0.657719,-1.48183,0.424706) -- (-0.472922,-1.568761,0.424706) -- cycle;
\path[stlfacet] (-1.546849,-0.676474,0.424706) -- (-1.231027,-1.297387,0.424706) -- (-0.79667,-0.735233,0.424706) -- cycle;
\path[stlfacet] (0.449689,-2.248443,0.424706) -- (1.349066,-2.248443,0.424706) -- (0.657719,-1.48183,0.424706) -- cycle;
\path[stlfacet] (-2.248443,0.224844,0.424706) -- (-2.810554,0,0.424706) -- (-2.248443,-0.224844,0.424706) -- cycle;
\path[stlfacet] (-2.810554,0,0.424706) -- (-2.248443,0.224844,0.424706) -- (-2.810554,0.224844,0.424706) -- cycle;
\path[stlfacet] (-0.042221,-0.894517,0.424706) -- (-0.79667,-0.735233,0.424706) -- (-0.472922,-1.568761,0.424706) -- cycle;
\path[stlfacet] (0.657719,-1.48183,0.424706) -- (-0.042221,-0.894517,0.424706) -- (-0.472922,-1.568761,0.424706) -- cycle;
\path[stlfacet] (-1.124222,0,0.424706) -- (-2.248443,-0.224844,0.424706) -- (-1.546849,-0.676474,0.424706) -- cycle;
\path[stlfacet] (-1.124222,0,0.424706) -- (-1.546849,-0.676474,0.424706) -- (-0.79667,-0.735233,0.424706) -- cycle;
\path[stlfacet] (-2.248443,0.224844,0.424706) -- (-2.248443,-0.224844,0.424706) -- (-1.124222,0,0.424706) -- cycle;
\path[stlfacet] (0.657719,-1.48183,0.424706) -- (1.349066,-2.248443,0.424706) -- (1.586749,-1.62102,0.424706) -- cycle;
\path[stlfacet] (1.349066,-2.248443,0.424706) -- (2.248443,-2.248443,0.424706) -- (1.586749,-1.62102,0.424706) -- cycle;
\path[stlfacet] (0.629062,-0.680291,0.424706) -- (-0.042221,-0.894517,0.424706) -- (0.657719,-1.48183,0.424706) -- cycle;
\path[stlfacet] (-0.042221,-0.894517,0.424706) -- (-0.281055,0,0.424706) -- (-0.79667,-0.735233,0.424706) -- cycle;
\path[stlfacet] (1.174187,-1.124222,0.424706) -- (0.657719,-1.48183,0.424706) -- (1.586749,-1.62102,0.424706) -- cycle;
\path[stlfacet] (-0.281055,0,0.424706) -- (-1.124222,0,0.424706) -- (-0.79667,-0.735233,0.424706) -- cycle;
\path[stlfacet] (-1.124222,0,0.424706) -- (-1.546849,0.676474,0.424706) -- (-2.248443,0.224844,0.424706) -- cycle;
\path[stlfacet] (-2.248443,0.224844,0.424706) -- (-1.546849,0.676474,0.424706) -- (-2.248443,0.899377,0.424706) -- cycle;
\path[stlfacet] (1.174187,-1.124222,0.424706) -- (0.629062,-0.680291,0.424706) -- (0.657719,-1.48183,0.424706) -- cycle;
\path[stlfacet] (2.248443,-2.248443,0.424706) -- (2.248443,-1.498962,0.424706) -- (1.586749,-1.62102,0.424706) -- cycle;
\path[stlfacet] (-0.281055,0,0.424706) -- (-0.042221,-0.894517,0.424706) -- (0.629062,-0.680291,0.424706) -- cycle;
\path[stlfacet] (1.174187,-1.124222,0.424706) -- (1.586749,-1.62102,0.424706) -- (1.806688,-1.124129,0.424706) -- cycle;
\path[stlfacet] (1.586749,-1.62102,0.424706) -- (2.248443,-1.498962,0.424706) -- (1.806688,-1.124129,0.424706) -- cycle;
\path[stlfacet] (-1.124222,0,0.424706) -- (-0.79667,0.735233,0.424706) -- (-1.546849,0.676474,0.424706) -- cycle;
\path[stlfacet] (-0.281055,0,0.424706) -- (-0.79667,0.735233,0.424706) -- (-1.124222,0,0.424706) -- cycle;
\path[stlfacet] (-1.546849,0.676474,0.424706) -- (-1.814703,1.271665,0.424706) -- (-2.248443,0.899377,0.424706) -- cycle;
\path[stlfacet] (0.629062,-0.680291,0.424706) -- (1.174187,-1.124222,0.424706) -- (1.603059,-0.626517,0.424706) -- cycle;
\path[stlfacet] (0.562111,0,0.424706) -- (-0.281055,0,0.424706) -- (0.629062,-0.680291,0.424706) -- cycle;
\path[stlfacet] (1.603059,-0.626517,0.424706) -- (1.174187,-1.124222,0.424706) -- (1.806688,-1.124129,0.424706) -- cycle;
\path[stlfacet] (-2.248443,0.899377,0.424706) -- (-1.814703,1.271665,0.424706) -- (-2.248443,1.57391,0.424706) -- cycle;
\path[stlfacet] (2.248443,-1.498962,0.424706) -- (2.248443,-0.749481,0.424706) -- (1.806688,-1.124129,0.424706) -- cycle;
\path[stlfacet] (-1.231027,1.297387,0.424706) -- (-1.814703,1.271665,0.424706) -- (-1.546849,0.676474,0.424706) -- cycle;
\path[stlfacet] (-1.546849,0.676474,0.424706) -- (-0.79667,0.735233,0.424706) -- (-1.231027,1.297387,0.424706) -- cycle;
\path[stlfacet] (1.405277,0,0.424706) -- (0.629062,-0.680291,0.424706) -- (1.603059,-0.626517,0.424706) -- cycle;
\path[stlfacet] (1.405277,0,0.424706) -- (0.562111,0,0.424706) -- (0.629062,-0.680291,0.424706) -- cycle;
\path[stlfacet] (2.248443,-0.749481,0.424706) -- (1.603059,-0.626517,0.424706) -- (1.806688,-1.124129,0.424706) -- cycle;
\path[stlfacet] (-0.042221,0.894517,0.424706) -- (-0.79667,0.735233,0.424706) -- (-0.281055,0,0.424706) -- cycle;
\path[stlfacet] (-1.814703,1.271665,0.424706) -- (-1.78417,1.780315,0.424706) -- (-2.248443,1.57391,0.424706) -- cycle;
\path[stlfacet] (0.562111,0,0.424706) -- (0.629062,0.680291,0.424706) -- (-0.281055,0,0.424706) -- cycle;
\path[stlfacet] (-1.231027,1.297387,0.424706) -- (-1.78417,1.780315,0.424706) -- (-1.814703,1.271665,0.424706) -- cycle;
\path[stlfacet] (-0.281055,0,0.424706) -- (0.629062,0.680291,0.424706) -- (-0.042221,0.894517,0.424706) -- cycle;
\path[stlfacet] (-2.248443,1.57391,0.424706) -- (-1.78417,1.780315,0.424706) -- (-2.248443,2.248443,0.424706) -- cycle;
\path[stlfacet] (-0.79667,0.735233,0.424706) -- (-0.472922,1.568761,0.424706) -- (-1.231027,1.297387,0.424706) -- cycle;
\path[stlfacet] (1.405277,0,0.424706) -- (0.629062,0.680291,0.424706) -- (0.562111,0,0.424706) -- cycle;
\path[stlfacet] (2.248443,-0.749481,0.424706) -- (2.248443,0,0.424706) -- (1.603059,-0.626517,0.424706) -- cycle;
\path[stlfacet] (-0.042221,0.894517,0.424706) -- (-0.472922,1.568761,0.424706) -- (-0.79667,0.735233,0.424706) -- cycle;
\path[stlfacet] (2.248443,0,0.424706) -- (1.405277,0,0.424706) -- (1.603059,-0.626517,0.424706) -- cycle;
\path[stlfacet] (-1.349066,2.248443,0.424706) -- (-1.78417,1.780315,0.424706) -- (-1.231027,1.297387,0.424706) -- cycle;
\path[stlfacet] (-2.248443,2.248443,0.424706) -- (-1.78417,1.780315,0.424706) -- (-1.349066,2.248443,0.424706) -- cycle;
\path[stlfacet] (-1.231027,1.297387,0.424706) -- (-0.472922,1.568761,0.424706) -- (-1.349066,2.248443,0.424706) -- cycle;
\path[stlfacet] (1.405277,0,0.424706) -- (1.603059,0.626517,0.424706) -- (0.629062,0.680291,0.424706) -- cycle;
\path[stlfacet] (2.248443,0,0.424706) -- (1.603059,0.626517,0.424706) -- (1.405277,0,0.424706) -- cycle;
\path[stlfacet] (0.629062,0.680291,0.424706) -- (0.657719,1.48183,0.424706) -- (-0.042221,0.894517,0.424706) -- cycle;
\path[stlfacet] (0.657719,1.48183,0.424706) -- (-0.472922,1.568761,0.424706) -- (-0.042221,0.894517,0.424706) -- cycle;
\path[stlfacet] (0.629062,0.680291,0.424706) -- (1.603059,0.626517,0.424706) -- (1.174187,1.124222,0.424706) -- cycle;
\path[stlfacet] (-1.349066,2.248443,0.424706) -- (-0.472922,1.568761,0.424706) -- (-0.449689,2.248443,0.424706) -- cycle;
\path[stlfacet] (1.174187,1.124222,0.424706) -- (0.657719,1.48183,0.424706) -- (0.629062,0.680291,0.424706) -- cycle;
\path[stlfacet] (2.248443,0.749481,0.424706) -- (1.603059,0.626517,0.424706) -- (2.248443,0,0.424706) -- cycle;
\path[stlfacet] (1.603059,0.626517,0.424706) -- (1.806688,1.124129,0.424706) -- (1.174187,1.124222,0.424706) -- cycle;
\path[stlfacet] (0.449689,2.248443,0.424706) -- (-0.472922,1.568761,0.424706) -- (0.657719,1.48183,0.424706) -- cycle;
\path[stlfacet] (-0.449689,2.248443,0.424706) -- (-0.472922,1.568761,0.424706) -- (0.449689,2.248443,0.424706) -- cycle;
\path[stlfacet] (2.248443,0.749481,0.424706) -- (1.806688,1.124129,0.424706) -- (1.603059,0.626517,0.424706) -- cycle;
\path[stlfacet] (1.174187,1.124222,0.424706) -- (1.586749,1.62102,0.424706) -- (0.657719,1.48183,0.424706) -- cycle;
\path[stlfacet] (1.174187,1.124222,0.424706) -- (1.806688,1.124129,0.424706) -- (1.586749,1.62102,0.424706) -- cycle;
\path[stlfacet] (2.248443,1.498962,0.424706) -- (1.806688,1.124129,0.424706) -- (2.248443,0.749481,0.424706) -- cycle;
\path[stlfacet] (0.449689,2.248443,0.424706) -- (0.657719,1.48183,0.424706) -- (1.349066,2.248443,0.424706) -- cycle;
\path[stlfacet] (0.657719,1.48183,0.424706) -- (1.586749,1.62102,0.424706) -- (1.349066,2.248443,0.424706) -- cycle;
\path[stlfacet] (1.586749,1.62102,0.424706) -- (1.806688,1.124129,0.424706) -- (2.248443,1.498962,0.424706) -- cycle;
\path[stlfacet] (2.248443,2.248443,0.424706) -- (1.586749,1.62102,0.424706) -- (2.248443,1.498962,0.424706) -- cycle;
\path[stlfacet] (1.349066,2.248443,0.424706) -- (1.586749,1.62102,0.424706) -- (2.248443,2.248443,0.424706) -- cycle;

%% file: patch_opt_91.tikz
\path[stlfacet] (-2.2484,-2.2484,0.4247) -- (-1.3491,-2.2484,0.4247) -- (-1.7842,-1.7803,0.4247) -- cycle;
\path[stlfacet] (-2.2484,-1.5739,0.4247) -- (-2.2484,-2.2484,0.4247) -- (-1.7842,-1.7803,0.4247) -- cycle;
\path[stlfacet] (-1.8147,-1.2717,0.4247) -- (-2.2484,-1.5739,0.4247) -- (-1.7842,-1.7803,0.4247) -- cycle;
\path[stlfacet] (-1.3491,-2.2484,0.4247) -- (-1.231,-1.2974,0.4247) -- (-1.7842,-1.7803,0.4247) -- cycle;
\path[stlfacet] (-1.3491,-2.2484,0.4247) -- (-0.4497,-2.2484,0.4247) -- (-0.4729,-1.5688,0.4247) -- cycle;
\path[stlfacet] (-1.231,-1.2974,0.4247) -- (-1.8147,-1.2717,0.4247) -- (-1.7842,-1.7803,0.4247) -- cycle;
\path[stlfacet] (-1.231,-1.2974,0.4247) -- (-1.3491,-2.2484,0.4247) -- (-0.4729,-1.5688,0.4247) -- cycle;
\path[stlfacet] (-3.3727,0,0.2124) -- (-3.3727,-0.10323,0.30989) -- (-3.3727,-0.2248,0.26734) -- (-3.3727,-0.2248,0) -- cycle;
\path[stlfacet] (-3.3727,0.2248,0) -- (-3.3727,0,0.2124) -- (-3.3727,-0.2248,0) -- cycle;
\path[stlfacet] (-0.4497,-2.2484,0.4247) -- (0.4497,-2.2484,0.4247) -- (-0.4729,-1.5688,0.4247) -- cycle;
\path[stlfacet] (-3.3727,0.2248,0.4247) -- (-3.3727,-0.10323,0.30989) -- (-3.3727,0,0.2124) -- cycle;
\path[stlfacet] (-2.8106,-0.2248,0.4247) -- (-2.8106,0,0.4247) -- (-3.3727,-0.2248,0.4247) -- cycle;
\path[stlfacet] (-3.3727,0.2248,0.4247) -- (-3.3727,0,0.2124) -- (-3.3727,0.2248,0) -- cycle;
\path[stlfacet] (-3.3727,0.2248,0.4247) -- (-3.3727,-0.2248,0.4247) -- (-2.8106,0,0.4247) -- cycle;
\path[stlfacet] (-2.8106,0,0.4247) -- (-2.8106,-0.2248,0.4247) -- (-2.2484,-0.2248,0.4247) -- cycle;
\path[stlfacet] (-0.7967,-0.7352,0.4247) -- (-1.231,-1.2974,0.4247) -- (-0.4729,-1.5688,0.4247) -- cycle;
\path[stlfacet] (-2.8106,0.2248,0.4247) -- (-3.3727,0.2248,0.4247) -- (-2.8106,0,0.4247) -- cycle;
\path[stlfacet] (0.4497,-2.2484,0.4247) -- (0.6577,-1.4818,0.4247) -- (-0.4729,-1.5688,0.4247) -- cycle;
\path[stlfacet] (-1.5468,-0.6765,0.4247) -- (-1.231,-1.2974,0.4247) -- (-0.7967,-0.7352,0.4247) -- cycle;
\path[stlfacet] (0.4497,-2.2484,0.4247) -- (1.3491,-2.2484,0.4247) -- (0.6577,-1.4818,0.4247) -- cycle;
\path[stlfacet] (-2.2484,0.2248,0.4247) -- (-2.8106,0,0.4247) -- (-2.2484,-0.2248,0.4247) -- cycle;
\path[stlfacet] (-2.8106,0,0.4247) -- (-2.2484,0.2248,0.4247) -- (-2.8106,0.2248,0.4247) -- cycle;
\path[stlfacet] (-0.0422,-0.8945,0.4247) -- (-0.7967,-0.7352,0.4247) -- (-0.4729,-1.5688,0.4247) -- cycle;
\path[stlfacet] (0.6577,-1.4818,0.4247) -- (-0.0422,-0.8945,0.4247) -- (-0.4729,-1.5688,0.4247) -- cycle;
\path[stlfacet] (-1.1242,0,0.4247) -- (-2.2484,-0.2248,0.4247) -- (-1.5468,-0.6765,0.4247) -- cycle;
\path[stlfacet] (-1.1242,0,0.4247) -- (-1.5468,-0.6765,0.4247) -- (-0.7967,-0.7352,0.4247) -- cycle;
\path[stlfacet] (-2.2484,0.2248,0.4247) -- (-2.2484,-0.2248,0.4247) -- (-1.1242,0,0.4247) -- cycle;
\path[stlfacet] (0.6577,-1.4818,0.4247) -- (1.3491,-2.2484,0.4247) -- (1.5867,-1.621,0.4247) -- cycle;
\path[stlfacet] (1.3491,-2.2484,0.4247) -- (2.2484,-2.2484,0.4247) -- (1.5867,-1.621,0.4247) -- cycle;
\path[stlfacet] (0.6291,-0.6803,0.4247) -- (-0.0422,-0.8945,0.4247) -- (0.6577,-1.4818,0.4247) -- cycle;
\path[stlfacet] (-1.1242,0,0.4247) -- (-1.5468,0.6765,0.4247) -- (-2.2484,0.2248,0.4247) -- cycle;
\path[stlfacet] (1.1742,-1.1242,0.4247) -- (0.6291,-0.6803,0.4247) -- (0.6577,-1.4818,0.4247) -- cycle;
\path[stlfacet] (2.2484,-2.2484,0.4247) -- (2.2484,-1.499,0.4247) -- (1.5867,-1.621,0.4247) -- cycle;
\path[stlfacet] (1.1742,-1.1242,0.4247) -- (1.5867,-1.621,0.4247) -- (1.8067,-1.1241,0.4247) -- cycle;
\path[stlfacet] (1.5867,-1.621,0.4247) -- (2.2484,-1.499,0.4247) -- (1.8067,-1.1241,0.4247) -- cycle;
\path[stlfacet] (-1.1242,0,0.4247) -- (-0.7967,0.7352,0.4247) -- (-1.5468,0.6765,0.4247) -- cycle;
\path[stlfacet] (0.6291,-0.6803,0.4247) -- (1.1742,-1.1242,0.4247) -- (1.6031,-0.6265,0.4247) -- cycle;
\path[stlfacet] (1.6031,-0.6265,0.4247) -- (1.1742,-1.1242,0.4247) -- (1.8067,-1.1241,0.4247) -- cycle;
\path[stlfacet] (-1.5468,0.6765,0.4247) -- (-0.7967,0.7352,0.4247) -- (-1.231,1.2974,0.4247) -- cycle;
\path[stlfacet] (1.4053,0,0.4247) -- (0.6291,-0.6803,0.4247) -- (1.6031,-0.6265,0.4247) -- cycle;
\path[stlfacet] (-1.8147,1.2717,0.4247) -- (-1.7842,1.7803,0.4247) -- (-2.2484,1.5739,0.4247) -- cycle;
\path[stlfacet] (-1.231,1.2974,0.4247) -- (-1.7842,1.7803,0.4247) -- (-1.8147,1.2717,0.4247) -- cycle;
\path[stlfacet] (-2.2484,1.5739,0.4247) -- (-1.7842,1.7803,0.4247) -- (-2.2484,2.2484,0.4247) -- cycle;
\path[stlfacet] (-0.7967,0.7352,0.4247) -- (-0.4729,1.5688,0.4247) -- (-1.231,1.2974,0.4247) -- cycle;
\path[stlfacet] (-0.0422,0.8945,0.4247) -- (-0.4729,1.5688,0.4247) -- (-0.7967,0.7352,0.4247) -- cycle;
\path[stlfacet] (2.2484,0,0.4247) -- (1.4053,0,0.4247) -- (1.6031,-0.6265,0.4247) -- cycle;
\path[stlfacet] (-1.3491,2.2484,0.4247) -- (-1.7842,1.7803,0.4247) -- (-1.231,1.2974,0.4247) -- cycle;
\path[stlfacet] (-2.2484,2.2484,0.4247) -- (-1.7842,1.7803,0.4247) -- (-1.3491,2.2484,0.4247) -- cycle;
\path[stlfacet] (-1.231,1.2974,0.4247) -- (-0.4729,1.5688,0.4247) -- (-1.3491,2.2484,0.4247) -- cycle;
\path[stlfacet] (1.4053,0,0.4247) -- (1.6031,0.6265,0.4247) -- (0.6291,0.6803,0.4247) -- cycle;
\path[stlfacet] (2.2484,0,0.4247) -- (1.6031,0.6265,0.4247) -- (1.4053,0,0.4247) -- cycle;
\path[stlfacet] (0.6291,0.6803,0.4247) -- (0.6577,1.4818,0.4247) -- (-0.0422,0.8945,0.4247) -- cycle;
\path[stlfacet] (0.6577,1.4818,0.4247) -- (-0.4729,1.5688,0.4247) -- (-0.0422,0.8945,0.4247) -- cycle;
\path[stlfacet] (0.6291,0.6803,0.4247) -- (1.6031,0.6265,0.4247) -- (1.1742,1.1242,0.4247) -- cycle;
\path[stlfacet] (-1.3491,2.2484,0.4247) -- (-0.4729,1.5688,0.4247) -- (-0.4497,2.2484,0.4247) -- cycle;
\path[stlfacet] (1.1742,1.1242,0.4247) -- (0.6577,1.4818,0.4247) -- (0.6291,0.6803,0.4247) -- cycle;
\path[stlfacet] (1.6031,0.6265,0.4247) -- (1.8067,1.1241,0.4247) -- (1.1742,1.1242,0.4247) -- cycle;
\path[stlfacet] (0.4497,2.2484,0.4247) -- (-0.4729,1.5688,0.4247) -- (0.6577,1.4818,0.4247) -- cycle;
\path[stlfacet] (-0.4497,2.2484,0.4247) -- (-0.4729,1.5688,0.4247) -- (0.4497,2.2484,0.4247) -- cycle;
\path[stlfacet] (1.1742,1.1242,0.4247) -- (1.8067,1.1241,0.4247) -- (1.5867,1.621,0.4247) -- cycle;
\path[stlfacet] (0.4497,2.2484,0.4247) -- (0.6577,1.4818,0.4247) -- (1.3491,2.2484,0.4247) -- cycle;
\path[stlfacet] (0.6577,1.4818,0.4247) -- (1.5867,1.621,0.4247) -- (1.3491,2.2484,0.4247) -- cycle;
\path[stlfacet] (1.5867,1.621,0.4247) -- (1.8067,1.1241,0.4247) -- (2.2484,1.499,0.4247) -- cycle;
\path[stlfacet] (2.2484,2.2484,0.4247) -- (1.5867,1.621,0.4247) -- (2.2484,1.499,0.4247) -- cycle;
\path[stlfacet] (1.3491,2.2484,0.4247) -- (1.5867,1.621,0.4247) -- (2.2484,2.2484,0.4247) -- cycle;